\documentclass[]{interact}
\usepackage{latexsym}
\usepackage{anyfontsize} 
\usepackage{tikz} 
\usetikzlibrary{cd} 
\usepackage{color}
\usepackage{hyperref}
\usepackage{url}
\usepackage{breakurl}
\newcommand{\bburl}[1]{\textcolor{blue}{\url{#1}}}
\usepackage{amsmath, amsthm}
\usepackage{thmtools}
\usepackage{thm-restate}
\usepackage{mathtools}

\makeatletter
\newcommand{\monthyear}[1]{%
  \def\@monthyear{\uppercase{#1}}}
\newcommand{\volnumber}[1]{%
  \def\@volnumber{\uppercase{#1}}}
\makeatother

\newcommand{\updown}[2]{#1^{(#2)}}

\theoremstyle{plain}
\numberwithin{equation}{section} 
\newtheorem{thm}{Theorem}[section] 
\newtheorem{theorem}[thm]{Theorem}
\newtheorem{lemma}[thm]{Lemma}

\newtheorem{problem}[thm]{Problem}

\numberwithin{table}{section} 
\numberwithin{figure}{section}

\begin{document}

\monthyear{Month Year}
\volnumber{Volume, Number}
\setcounter{page}{1}

\title{Fibonacci numbers and the probability of polygon formation using random length sticks}

\author{
\name{Mark Brennan\textsuperscript{a}\thanks{Email address: marksbrennan58@gmail.com}, 
Noah Callow\textsuperscript{a}\thanks{Email address: noahcallow2@gmail.com} \and
Tian Cao Lin\textsuperscript{b}\thanks{Email Address: tiancaolin23@mails.ucas.ac.cn}}
\affil{\textsuperscript{a}Independent Researcher, UK \\
\textsuperscript{b}University of Chinese Academy of Sciences, China
}}

\maketitle

{\bf Article type}: research 
\bigskip

\begin{abstract}
	We present two complementary proofs that, if the lengths of $n$ sticks are sampled at random, then the probability that no $p+1$ sticks can form a $(p+1)$-sided polygon can be expressed as the product of the reciprocals of a series of terms involving the $p$-step Fibonacci numbers. The first proof uses matrix algebra to extend the method previously used by Sudbury et al. \cite{Sudbury2025} to derive expressions for the probabilities of not being able to form triangles and quadrilaterals. The second alternative proof uses a different approach based on expressions for the minimum and maximum lengths of each stick that are compatible with the constraint of not being able to form a $(p+1)$-sided polygon, and provides insights into the structure of the probability expressions and the underlying reason that they include the Fibonacci numbers. Furthermore, the approach is developed in a generalised way that can, in principle, be applied to sticks randomly sampled from any probability distribution.
\end{abstract}


\section{Introduction}
The problem of random sticks is one of the classic problems in geometric probability with a long history, dating back to the \textbf{"broken stick problem"} posed by Lemoine in 1873~\cite{Lemoine1873}: a stick of unit length is broken at two random points, and one asks for the probability that the three resulting segments can form a triangle. Since then, many generalisations of this basic formulation have been studied. D'Andrea and Gomez~\cite{DAngelis2006} extended the problem to the case of \(n\) pieces, proving that the probability that a stick broken randomly into \(n\) pieces can form an \(n\)-gon is \(1 - n/2^{n-1}\). More generally, Verreault~\cite{Verreault2022a, Verreault2022b} and Mukerjee~\cite{Mukerjee2024} systematically studied the \((p+1)\)-gon version: given a stick broken into \(n\) pieces, what is the probability that no, every or any random subset of \(p+1\) pieces can form a \((p+1)\)-gon? Remarkably, the probability formulas for no subset of \(p+1\) pieces forming a \((p+1)\)-gon are deeply connected with the \(p\)-step Fibonacci numbers.

In a different direction, Petersen and Tenner~\cite{Petersen2020} introduced a model that is fundamentally different in its assumptions --- the \textbf{"pick-up sticks model"}. In this model, the random lengths of \(n\) sticks are drawn independently from a uniform distribution on \([0,1]\), and there is no constraint that they sum to one. They proved that the probability that \(n\) such sticks cannot form an \(n\)-gon is \(\frac{1}{(n-1)!}\)~\cite{Petersen2020}.

Within this framework, Sudbury et al. \cite{Sudbury2025} show that the probability of not being able to form a triangle from any three of $n$ sticks with independent random lengths selected from a uniform distribution on $[0,1]$ is
\begin{equation}
    {PN}_3^n=\ \prod_{i=1}^{n}\frac{1}{F_i},
\label{eq:no_triangle}
\end{equation}
where $F_i$ are the (2-step) Fibonacci numbers defined by $F_1=1,$ $F_2=1,$ and $F_i=F_{i-1}+F_{i-2}$ for $i > 2$.

To prove this result they used the constraint that, after the sticks have been sorted in order of increasing length, the lengths of all consecutive triples must satisfy $l_{i-2}+l_{i-1} \leq l_{i}$, which is a constraint that the minimum length, $l_i^{\text{min}}$, of each stick ($i>2$) is the sum of the lengths of the next two shortest sticks. They then used the fact that the order statistics of a uniform $[0,1]$ sample has the same distribution as the normalised cumulative sums of exponential variables. On integrating over the $n$ exponential variables, the Fibonacci numbers in (\ref{eq:no_triangle}) emerged in the exponents of the integrated exponentials.

They then used the same approach to prove that the probability of not being able to form a quadrilateral from any four of $n$ sticks with independent uniformly distributed random lengths from $[0, 1]$ is
\begin{equation}
{PN}_4^n=\frac{1}{T_n-\ T_{n-2}}\prod_{i=1}^{n-1}\frac{1}{T_i},
    \label{eq:no_4-gon}
\end{equation}
where $T_i$ are the are the Tribonacci (3-step Fibonacci) numbers defined by $T_1=1$, $T_2=1$, $T_3=2$, and $T_i=\ T_{i-1}+T_{i-2}+T_{i-3}\ $  for $i>3$.

The algebraic manipulations to prove this result were more involved and also made use of Tribonacci number recursion relationships to reformulate the exponents of the exponential probability distributions.  

At the start of their paper, Sudbury at al. \cite{Sudbury2025} suggested that the occurrence of the factorials of the Fibonacci and Tribonacci numbers in the above probability expressions was “surprising”. However, they concluded their paper by postulating that their results suggested a deeper underlying structure in the probability that no subset of $n$ independently sampled random stick lengths can form a $(p+1)$-gon (polygon with $p+1$ sides). They invited the reader to extend their framework to find a closed form expression of ${PN}_{p+1}^n$ for higher values of $p+1$, and also encouraged the reader to search for an alternative proof of the same Fibonorial law that would shed further light on the probabilistic structure they had uncovered.
In this paper, we respond to both of these challenges by first extending the method of Sudbury et al. \cite{Sudbury2025} by using matrix algebra to prove  Theorem \ref{theorem: PN_{p+1}^n}. 

\begin{restatable}{thm}{MainIdentity}
    \label{theorem: PN_{p+1}^n}
	If $n$ sticks have independent, random lengths chosen from a uniform distribution on the unit interval $[0,1]$, the probability that no $(p+1)$ of them can form a $(p+1)$-gon (a polygon with $p+1$ sides) is
	\begin{equation}
	    {PN}_{p+1}^n= \prod_{i=1}^{n-p+2}\frac{1}{F_i^{p}} \prod_{i=n-p+3}^{n}\frac{1}{F_i^{p}- \sum_{j=1}^{i-n+p-2}{jF_{i-j-1}^{p}}},
        \label{eq:main_result}
	\end{equation}
    where $\{F_i^{p}\}^{\infty}_{i=-\infty}$ is the sequence of $p$-step Fibonacci numbers generated using the recurrence relationship
    \begin{equation*}
        F_i^p=\sum_{j=1}^{p}F_{i-j}^p
    \end{equation*}
    and the initial conditions $F_1^p=1$ and $F_0^p=F_{-1}^p=\ldots=F_{-p+2}^p=0$.
\end{restatable}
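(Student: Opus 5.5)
The plan is to replace the exponential-variable computation of Sudbury et al. \cite{Sudbury2025} with a direct linear change of variables. The probability then becomes the volume of a simplex, and the $p$-step Fibonacci numbers appear as the coefficients of one linear form.

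\textbf{Step 1: reduce to consecutive constraints.} Sort the lengths as $l_1\le l_2\le\cdots\le l_n$; ties have probability zero. A set of $p+1$ sticks fails to form a $(p+1)$-gon exactly when its longest stick is at least the sum of the other $p$. For a fixed longest stick $l_i$, the sum of the others is largest when they are $l_{i-1},\dots,l_{i-p}$. Hence ``no $(p+1)$-gon'' is equivalent to
\begin{equation*}
l_i\ \ge\ \sum_{j=1}^{p} l_{i-j}\qquad (p+1\le i\le n).
\end{equation*}
These constraints already imply $l_i\ge l_{i-1}$ for $i>p$, because all lengths are nonnegative. So the only ordering constraints left to impose are $0\le l_1\le\cdots\le l_p$. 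The probability is $n!$ times the volume of the region of $[0,1]^n$ cut out by these conditions; on that region the bound $l_n\le 1$ implies all the others.

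\textbf{Step 2: slack variables.} Define $d_1=l_1$, $d_k=l_k-l_{k-1}$ for $2\le k\le p$, and $d_k=l_k-\sum_{j=1}^p l_{k-j}$ for $k>p$. The map $l\mapsto d$ is lower triangular with unit diagonal, so it preserves volume. The region becomes
\begin{equation*}
\{\,d\ge 0,\ \textstyle\sum_k c_k d_k\le 1\,\},
\end{equation*}
where $c_k=\partial l_n/\partial d_k\ge 0$. This simplex has volume $1/(n!\prod_k c_k)$, so
\begin{equation*}
{PN}_{p+1}^n=\prod_{k=1}^n \frac{1}{c_k}.
\end{equation*}

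\textbf{Step 3: compute the $c_k$.} To find $c_k$, set $d_k=1$ and all other $d$'s to $0$, and track the resulting sequence $l_1,\dots,l_n$. For $k>p$ this gives $l_j=0$ for $j<k$, $l_k=1$, and thereafter the $p$-step recurrence. This is exactly $F^p$ shifted so that $l_k=F^p_1$, so $c_k=F^p_{n-k+1}$, and these give the factors with $i=n-k+1\le n-p$. For $k\le p$ the sequence instead starts with zeros in positions $1,\dots,k-1$ and ones in positions $k,\dots,p$, then obeys the recurrence. For $k=p$ and $k=p-1$ the windows $(\dots,0,1)$ and $(\dots,0,1,1)$ coincide with the Fibonacci initial segment, because $F^p_2=F^p_1=1$. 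Hence $c_p=F^p_{n-p+1}$ and $c_{p-1}=F^p_{n-p+2}$, which explains why the first product in (\ref{eq:main_result}) runs up to $n-p+2$.

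\textbf{Step 4 (main obstacle): the correction terms.} For $k\le p-2$ the window of ones must be compared with the true Fibonacci window $F^p_1,F^p_2,\dots=1,1,2,4,\dots,2^{m-2}$, where $m=p-k+1$ is its length. The difference is supported on the window. I would write it as an integer combination of shifted copies of $F^p$, each of which continues to satisfy the recurrence. This yields $c_k=F^p_i-\sum_j a_jF^p_{i-j-1}$ with $i=n-k+1$. The claim to prove is $a_j=j$. I would verify it by induction on $m$, using the identity $F^p_{r+1}=2F^p_r-F^p_{r-p}$ inside the window, where it reduces to doubling. Alternatively I would check it with the generating function $x/(1-x-\cdots-x^p)$: the numerator polynomial for an initial window of ones is $x(1-x^{m})/(1-x)$ times the denominator, and expanding it should produce the coefficients $1,2,3,\dots$. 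Checking the index range $j\le i-n+p-2$ against $m$ is the routine bookkeeping that closes the argument.
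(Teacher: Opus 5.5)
Your proposal is correct and is essentially the paper's Appendix~2 proof, not the Section~2 matrix-algebra proof. That proof uses the same unit lower-triangular substitution $y_i=l_i-l_i^{\min}$, the same simplex $\{y\ge 0,\ \sum_i m_iy_i\le 1\}$ of volume $1/(n!\prod_i m_i)$, and identifies each $m_i$ as the $n$-th term of a $p$-step recurrence started from zeros followed by a window of ones. The one real difference is in your Step~4: the paper checks that $F^p_{r-k+1}-\sum_{j=1}^{p-k-1}jF^p_{r-k-j}$ reproduces that zero/one window for $1\le r\le p$ via the identity $2^{t+1}-\sum_{j=1}^{t}j2^{t-j}-t-1=1$, whereas you use generating functions. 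Your route works provided the numerator is $(x+\cdots+x^{m})(1-x-\cdots-x^{p})$ reduced modulo $x^{m+1}$ (not the full product), i.e.\ $x-\sum_{j=1}^{m-2}jx^{j+2}$, which gives $a_j=j$ immediately.
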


We then go on to develop an alternative proof of this theorm, first for triangles and then for the general case of a $(p+1)$-gon. This alternative approach provides the deeper insight sought by Sudbury et al.~\cite{Sudbury2025} by revealing how the geometric constraints on the minimum and maximum allowable lengths of each stick, that arise from the requirement that no $p+1$ sticks can form a $(p+1)$-gon, are the fundamental reason for the appearance of the Fibonacci numbers in the probability expression.

The paper is organized as follows. Section \ref{MA_prelim} reviews the necessary tools from order statistics and exponential spacings required for the matrix algebra proof of Theorem \ref{theorem: PN_{p+1}^n}, which is then presented in Section \ref{MA_main_result}. Section \ref{geometric_approach} develops the alternative geometric proof based on extrema length constraints, starting in Section \ref{subsec:GA_prelim} with a preliminary proof for the simplest case of triangles. Section \ref{subsec:geom_main_result} then builds on this simplest case to prove the general case for a $(p+1)$-gon. The geometric proof is developed in a way that can, in principle, be applied to sticks drawn randomly from any bounded probability distribution. Section \ref{subsec:Unbounded} provides a further generalistaion of the method to unbounded probability distributions, and applies this to the particular case of an exponential distribution. Section \ref{literature} discusses the relationship between our results and the existing literature on both the broken stick and pick-up sticks problems. Section \ref{conclusion} offers concluding remarks and suggests directions for future research. Appendix 1 (Section \ref{Appendix}) presents the derivation of an algorithm for calculating the explicit form of linear functions that appear in the expressions for the maximum stick lengths compatible with the constraint of not being able to form a ($p+1$)-gon. Finally, Appendix 2 provides a third more concise, simple proof of our main result \ref{theorem: PN_{p+1}^n}, the development of which was informed by our geometric proof. The essence of this simple proof is consistent with a proof  reported by Kern \cite{Kern2025}, which was developed from a proof of the triangles case of the pick-up sticks problem produced by the large language model \textit{ChatGBT}. However, an important difference is that Kern's \cite{Kern2025} probability expression uses different $(k-1)-bonacci$ numbers (defined by using different initial terms) from the $p$-step Fibonacci numbers used here.

\section{Matrix Algebra Approach}
\subsection{Preliminaries} \label{MA_prelim}
We first denote
\begin{align} \label{matrix A_p}
     \mathbf{A}_p=\begin{pmatrix}
1& 0&\dots&0&p-1\\
1& 0&\dots&0&p-2\\
0&1&\dots&0&p-3\\
\vdots&\vdots&&\vdots&\vdots\\
0& 0&\dots&1&0
 \end{pmatrix}\in \mathbb{R}^{p\times p}.
 \end{align} 



Before proving Theorem \ref{theorem: PN_{p+1}^n}, we state a lemma that captures the algebraic relations generated by the integration process that will be used in devleoping the proof.

\begin{lemma}\label{lemma A}
Let matrix \(\mathbf{A}_p\), defined by (\ref{matrix A_p}), \(\mathbf{R}^1=(1,1,\dots,1)^T\in \mathbb{R}^p\), and \(\mathbf{R}^l=(R_1^l,R^l_2,\dots,R_p^l)^T\in\mathbb{R}^p\) satisfy
    \[\mathbf{R}^l=\mathbf{A}^{l-1}_p\mathbf{R}^{1},\ l=1,2,3,\dots\]
Then \[R_p^l=F_l^p,R_{p-1}^l=F_{l+1}^p\] and 
\[\quad R_{i}^l=F_{l+p-i}^p-\sum_{j=1}^{p-i-1}(p-i-j)F^p_{l+j-1},i=1,2,\dots,p-2,\]
 where \(F^p_l\) \((l=1,2,3,\dots,)\) is the \(p\)-step Fibonacci numbers.
\end{lemma}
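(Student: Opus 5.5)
The plan is to prove the lemma by induction on $l$, using the recursion $\mathbf{R}^{l+1}=\mathbf{A}_p\mathbf{R}^l$ row by row. Reading off the rows of (\ref{matrix A_p}), this recursion is
\begin{align*}
R_1^{l+1}&=R_1^l+(p-1)R_p^l,\qquad R_2^{l+1}=R_1^l+(p-2)R_p^l,\\
R_k^{l+1}&=R_{k-1}^l+(p-k)R_p^l,\qquad k=3,\dots,p.
\end{align*}
Throughout I write $F_m$ for $F_m^p$. Let $G_i^l$ denote the claimed closed form, with $G_p^l=F_l$, $G_{p-1}^l=F_{l+1}$, and $G_i^l$ given by the displayed formula for $i\le p-2$. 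The goal is to show $R_i^l=G_i^l$ for all $i$ and $l$. The cases $p=2$ and $p=3$, where some of these sums are empty, will be checked along the way with the same computations.

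\textbf{Base case $l=1$.} Here every $R_i^1=1$. For $i=p,p-1$ this holds because $F_1=F_2=1$. For $i\le p-2$, put $m=p+1-i\le p+1$. From the initial conditions, the partial sums satisfy $\sum_{j=1}^{k}F_j=F_{k+1}$ whenever $k+1\le p+1$, because the recurrence for $F_{k+1}$ then only reaches back to the zero terms. Writing the weighted sum as an iterated sum gives
\[
\sum_{j=1}^{m-2}(m-1-j)F_j=\sum_{k=1}^{m-2}\sum_{j=1}^{k}F_j=\sum_{k=2}^{m-1}F_k=F_m-1.
\]
Hence $G_i^1=F_m-(F_m-1)=1$, as required.

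\textbf{Inductive step.} Assume $R^l=G^l$ componentwise and treat the rows in turn.
\begin{itemize}
\item For $k=p$: $R_p^{l+1}=G_{p-1}^l=F_{l+1}$.
\item For $k=p-1$: $R_{p-1}^{l+1}=G_{p-2}^l+F_l=(F_{l+2}-F_l)+F_l=F_{l+2}$.
\item For $3\le k\le p-2$: shift the index $j\mapsto j+1$ in the sum defining $G_k^{l+1}$. The sum in $G_{k-1}^l$ then agrees with it except for an extra $j=1$ term $(p-k)F_l$. That term is exactly cancelled by the added $(p-k)R_p^l=(p-k)F_l$.
\item For $k=2$: the same reindexing of $G_1^l$ gives $G_2^{l+1}-G_1^l=(p-2)F_l$. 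Again no Fibonacci recurrence is needed.
\end{itemize}

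The only row that genuinely needs the $p$-step recurrence is the first one, and I expect this to be the main point requiring care. I will compute $G_1^{l+1}-G_1^l$ by summation by parts. The difference of the weighted sums is
\[
\sum_{m=1}^{p-2}(p-1-m)F_{l+m}-\sum_{m=0}^{p-3}(p-2-m)F_{l+m}=\sum_{m=1}^{p-2}F_{l+m}-(p-2)F_l .
\]
Therefore
\[
G_1^{l+1}-G_1^l=F_{l+p}-F_{l+p-1}-\sum_{m=1}^{p-2}F_{l+m}+(p-2)F_l .
\]
The recurrence $F_{l+p}=\sum_{m=0}^{p-1}F_{l+m}$ reduces the first three terms to $F_l$. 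The difference is therefore $(p-1)F_l=(p-1)R_p^l$, which matches the first row of $\mathbf{A}_p$ and completes the induction.
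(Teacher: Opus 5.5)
Your proof is correct, and it takes a different route from the paper's.

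\textbf{The paper's route: derivation.} The paper eliminates the other coordinates from $\mathbf{R}^{l+1}=\mathbf{A}_p\mathbf{R}^l$. It chains rows $p,p-1,\dots,2$ and then uses the difference of rows 1 and 2, $R_1^l=R_p^{l-1}+R_2^l$, to show that the last coordinate alone obeys $R_p^{l+1}=\sum_{i=1}^{p}R_p^{l+1-i}$. It fixes the initial values by running the recursion backwards from $\mathbf{R}^{2-p}=(1,0,\dots,0)^T$. Checking $\mathbf{A}_p^{p-1}\mathbf{R}^{2-p}=\mathbf{R}^1$ and using invertibility of $\mathbf{A}_p$ gives $R_p^{2-p}=\dots=R_p^0=0$. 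It then recovers the remaining coordinates by downward induction on $i$, using $R_i^l=R_{i+1}^{l+1}-(p-i-1)R_p^l$.

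\textbf{Your route: verification.} You take the closed form as given and check it by forward induction on $l$.
\begin{itemize}
\item Your base case reduces to the partial-sum identity $\sum_{j=1}^{k}F^p_j=F^p_{k+1}$ for $k\le p$. This does the job of the paper's backward extension to non-positive indices.
\item In your inductive step, rows $2,\dots,p$ of $\mathbf{A}_p$ are pure index shifts. Only the first row uses the $p$-step recurrence. This mirrors the paper, where rows 1 and 2 are what produce the scalar recurrence.
\end{itemize}

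\textbf{Trade-offs.} The paper's approach explains where the formula comes from and needs no prior guess. Yours is shorter and avoids extending $\mathbf{R}^l$ to $l\le 0$ and the invertibility argument. It also pins down exactly where the recurrence enters.

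\textbf{Presentation point for $p=2$.} Your ``$k=p-1$'' case would refer to a nonexistent $G_0^l$. For $p=2$, row $1$ is the first row, so it is covered by your summation-by-parts computation with empty sums. That computation gives $F^p_{l+2}-F^p_{l+1}=F^p_l=(p-1)F^p_l$, as required.
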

\begin{proof}
\(\mathbf{R}^{l}=\mathbf{A}^{l-1}_p\mathbf{R}^1\) tell us that \(\mathbf{R}^{l}=\mathbf{A}_p\mathbf{R}^{l-1}\), that is to say

\begin{equation}\label{equations:1}
    \left\{
\begin{aligned}
  R_1^l&=R_1^{l-1}+(p-1)R_p^{l-1},\\
    R_2^l&=R_1^{l-1}+(p-2)R_p^{l-1},\\
    R_3^l&=R_2^{l-1}+(p-3)R_p^{l-1},\\
    \cdots\\
    R_p^l&=R_{p-1}^{l-1}.
\end{aligned}
\right.
\end{equation}
Therefore,
\begin{equation}\label{eq:preR^l_p}
\begin{aligned}
    R^{l+1}_p&=R^{l}_{p-1},\\&=R_{p-2}^{l-1}+R_p^{l-1},\\
    &=R^{l-2}_{p-3}+2R^{l-2}_p+R_p^{l-1},\\
    &\vdots\\
    &=R_{1}^{l-p+2}+\sum_{i=1}^{p-2} iR_p^{l-i}.
\end{aligned}
\end{equation}
From the first two formulas in equations (\ref{equations:1}),
\[R_2^l-R_1^l=-R_p^{l-1},\]
which means 
\[R_1^l=R_p^{l-1}+R_2^l.\]

Substituting the above expression into equation (\ref{eq:preR^l_p}) and successively applying the equations in system (\ref{equations:1}) from the second to the last, we obtain
\begin{align*}
    R_p^{l+1}&=R_1^{l-p+2}+\sum_{i=1}^{p-2}iR_p^{l-i}\\
    &=R_p^{l+1-p}+R_2^{l-p+2}+(p-2)R^{l-p+2}_p+\sum_{i=1}^{p-3} iR_p^{l-i}\\
    &=R_p^{l+1-p}+R_p^{l-p+2}+R_3^{l-p+3}+\sum_{i=1}^{p-3} iR_p^{l-i}\\
    &\vdots\\
    &=\sum_{i=1}^{p} R_p^{l+1-i} .
\end{align*}
Let \(\mathbf{R}^{-p+2}=(1,0,0,\dots,0)^T\in \mathbb{R}^p\), then \[\mathbf{R}^{t+2-p}=\mathbf{A}^t_p \mathbf{ R}^{2-p}=(1,1,\dots,1,0,0,\dots,0)^T\in \mathbb{R}^p, \text{ for } t<{p-1}.\]
The elements from the first to the (t+1)-th position of the vector are all one, and the remaining elements are zero, and 
\[\mathbf{R}^{1}=\mathbf{A}_p^{p-1}\mathbf{R}^{2-p}.\]
\(\mathbf{A}_p\) is invertible, which show that \(R_p^{-p+2}=\cdots=R_p^{0}=0\). Then this proves that \( R_p^l \) \((l=1,2,\dots)\) is precisely the \(p\)-step Fibonacci-type recurrence by definition. That's to say, \( R_p^l=F_p^l \) \((l=1,2,\dots)\).

Now we  use \(R_p^{m}\) \((m=1,2,\dots)\) to present \(R_i^{l}\). Come back to the relationship (\ref{equations:1}),

The last two equations imply that 
\[R_{p-1}^l=R_p^{l+1}=F^p_{l+1},\]
and 

\[R_{p-2}^l=R_{p-1}^{l+1}-R_{p}^l=R_p^{l+2}-R_p^l.\]
Substituting this into the third last equation (which represents \(R^l_{p-3}\)), we obtain
\[R_{p-3}^l=R^{l+1}_{p-2}-2R_p^l=R_p^{l+3}-R^{l+1}_p-2R_p^l,\]
Since
\[R_{i}^l=R_{i+1}^{l+1}-(p-i-1)R_{p}^l, \text{ for }i\leq p-1.\]
By induction, we obtain
\begin{align*}
    R_{i}^l&=R^{l+p-i}_p-\sum_{j=1}^{p-i-1}(p-i-j)R_p^{l+j-1}\\ &=F_{l+p-i}^p-\sum_{j=1}^{p-i-1}(p-i-j)F^p_{l+j-1}, \text{ for }i\leq p-2. \qedhere
\end{align*}
\end{proof}

\subsection{Main Result} \label{MA_main_result}
\begin{proof}[Proof of Theorem \ref{theorem: PN_{p+1}^n}]
Let $U_{(1)}\le U_{(2)}\le\dots\le U_{(n)}$ be the order statistics of $U_1,\dots,U_n$.  The condition that no \(p+1\) lengths can form a \((p+1)\)-gon is equivalent to
\begin{align}\label{first f}
\sum_{i=l}^{l+p-1} U_{(i)}\leq U_{(l+p)},\qquad \text{for all } 1\leq l\leq n-p.
\end{align}
It is well established (see,for example, Johnson et al. \cite{johnson1970continuous}) that the order statistics of a uniform sample can be represented via normalized sums of independent exponential variables, if \(x_1,\dots,x_{n+1}\stackrel{\mathrm{i.i.d.}}{\sim}\operatorname{Exp}(1)\) with density \(f(x)=e^{-x}\) and $S=x_1+\dots+x_{n+1}$, therefore,
\[
U_{(i)}\stackrel{d}{=}\frac{x_1+\dots+x_i}{S},\qquad i=1,\dots,n .
\]
Substituting this into (\ref{first f}) yields

\[
\begin{aligned}
\sum_{l=0}^{p-1}\frac{x_{1}+\cdots+x_{i+l}}{S}&\leq\frac{x_{1}+\cdots+x_{i+p}}{S}, \\
 (p-1)\sum_{ l=1}^ix_l+(p-2)&x_{i+1}+\dots +x_{i+p-2}\leq x_{i+p},
\end{aligned}
\]
where \(1\leq i\leq n-p\).  With \(x_{l}\) \((l=1,2,\dots,p)\) being unbounded between zero and infinity.  

The successive inequalities define a nested region in \(\mathbb{R}^{n}\), and the required probability corresponds to the volume of this region under the exponential density. To evaluate it, we integrate successively from \(x_{n}\) down to \(x_{1}\), each step producing a factor determined by the recurrence relation that will lead to the \(p\)-step Fibonacci pattern.

To make the subsequent formulas more concise, we introduce the notation \(L_j\), that
\begin{align} \label{simplnotations}
L_{j}= (p-1)\sum_{ l=1}^{j-p}x_l+(p-2)x_{j-p+1}+\dots +x_{j-2},
\end{align}
for \(j=p+1,p+2,\dots,n\). With \(L_j=0\) for \(j=1,2,\dots,p\).

The probability we need to calculate is
\begin{align*}
    PN^n_{p+1}=\int_{L_1}^\infty \int_{L_2}^\infty&\dots\int_{L_n}^\infty e^{-\sum_{i=1}^n x_i}dx_n\dots dx_2dx_1\\
    =\int_{L_1}^\infty\int_{L_2}^\infty&\cdots \int^\infty_{L_{n-1}} e^{-p\sum_{ l=1}^{n-p} x_l-(p-1)x_{n-p+1}-\dots -2x_{n-2}-x_{n-1}}dx_{n-1}\dots dx_2dx_1.
\end{align*}
For convenience, we introduce the vector \( \mathbf{R}^1,\mathbf{R}^2\in \mathbb{R}^p \) to rewrite this integral, 
\begin{align*}
    PN^n_{p+1}=\int_{L_1}^\infty \int_{L_2}^\infty&\dots\int_{L_n}^\infty e^{-R_1^1\sum_{l=1}^{n-p+1} x_l-R_2^1x_{n-p+2}-\dots -R_p^1x_n}dx_n\dots dx_2dx_1\\
    =\int_{L_1}^\infty\int_{L_2}^\infty&\cdots \int^\infty_{L_{n-1}} e^{-R_1^2\sum_{ l=1}^{n-p} x_{l}-R_2^2x_{n-p+1}-\dots -R_{p-1}^2x_{n-2}-R_p^2x_{n-1}}dx_{n-1}\dots dx_2dx_1.
\end{align*}

Here, the last term \(x_{n-p}\) in the \(\sum\)-summation has been taken out separately to compensate for the missing integrated \(x_n\).
Let the integral obtained after \(j\) integrations be denoted \(I_j^{p}\), for example,
\begin{align*}
    I_1^{p}=&\int_{L_n}^\infty e^{-R_1^1(\sum_{t=1}^{n-p} x_t+x_{n-p+1})-R_2^1x_{n-k+2}-\dots -R_p^1x_n}dx_n
    \\=&\frac{1}{R_p^1}e^{-R_1^2\sum_{ t=1}^{n-p} x_{t}-R_2^2x_{n-p+1}-\dots -R_{p-1}^2x_{n-2}-R_p^2x_{n-1}},
    \end{align*}
where \(L_n\) is given by (\ref{simplnotations}) and 
\[
\begin{aligned}
    R_1^2&=R_1^1+(p-1)R_p^1,\\
    R^2_i&=R_{i-1}^1+(p-i)R_p^1,\quad i=2,\dots,p.
\end{aligned}
\]
Now extend the relationship between \( \mathbf{R}^1 \) and \( \mathbf{R} ^2\) to the general case, i.e., the properties that \(\mathbf{R}^l\) possesses after \(l\) integrations. By induction, 
\begin{align*}
    I_l^{p}&=\prod_{t=1}^{l-1}\frac{1}{R^t_p}\int_{L_{n-l+1}}^\infty e^{-R_1^{l}(\sum_{t=1}^{n-l-p+1} x_t+x_{n-l-p+2})-R_2^{l}x_{n-l-p+3}-\dots -R_p^{l}x_{n-l+1}}dx_{n-l+1}
    \\&=\prod_{t=1}^l\frac{1}{R_p^t}e^{-R_1^{l+1}\sum_{ t=1}^{n-l-p+1} x_{t}-R_2^{l+1}x_{n-l-p+2}-\dots -R_{p-1}^{l+1}x_{n-l-1}-R_p^{l+1}x_{n-l}}.
\end{align*}
In the \(\Sigma\)-summation term of the second integrand, a factor is extracted outside, ensuring that the exponent always contains \(p\) terms.  Then 
\begin{align*}
R_1^{l+1}\sum_{ t=1}^{n-l-p+1} x_{t}&+R_2^{l+1}x_{n-l-p+2}+\dots +R_{p-1}^{l+1}x_{n-l-1}+R_p^{l+1}x_{n-l}\\&=R^l_p\times L_{n-l+1}+R_1^{l}(\sum_{t=1}^{n-l-p+1} x_t+x_{n-l-p+2})+R_2^{l}x_{n-l-p+3}+\dots +R_{p-1}^{l}x_{n-l}.
\end{align*}
By observing the relationship between the expression of \(L_{n-l+1}\) and the exponential term, we can obtain that \( \mathbf{R}^l \) satisfies the following recurrence relation, 
\[
\begin{aligned}
    R_1^{l+1}&=R_1^{l}+(p-1)R_p^{l},\\
    R_i^{l+1}&=R_{i-1}^{l}+(p-i)R_p^{l},\quad i=2,\dots,p.\\
\end{aligned}\]
 The transformation matrix \(\mathbf{A}_p\) we obtained here is identical to the matrix in (\ref{matrix A_p}).

Denote \(\mathbf{R}^l=(R_1^l,R_2^l,\dots,R_p^l)^T\), we have 
\[\mathbf{R}^{l+1}=\mathbf{A}_p\mathbf{R}^l=\mathbf{A}^l_p\mathbf{R}^1.\]
This relationship is the same as the condition stated in Lemma \ref{lemma A}. After \(n-p\) integrations, we are left with

\begin{align*}
    I_{n-p}^{p}=\prod _{l=1}^{n-p}\frac{1}{R_p^l}e^{{-R_1^{n-p+1} x_{1}-R_2^{n-p+1}x_{2}-\dots -R_{p-1}^{n-p+1}x_{p-1}-R_p^{n-p+1}x_{p}}}.
\end{align*}
Then, by integrating successively with respect to \( x_{n-p},x_{n-1},\dots, x_2 \), and \( x_1 \), we obtain
\[ PN^n_{p+1}=\prod_{l=1}^{n-p}\frac{1}{R^l_p} \prod_{i=1}^p\frac{1}{R^{n-p+1}_i}.\]
Note that in the second product, the terms involving \(R^{n-p+1}_p\) and \(R^{n-p+1}_{p-1} = R^{n-p+2}_p\) can be incorporated into the first product. Applying Lemma~\ref{lemma A}, together with the change of indices \(i' = n - i + 1\) and \(j' = p - i - j\) (renaming \(i', j'\) as \(i, j\)), completes the proof.
 
\end{proof}

\section{Geometric Approach} \label{geometric_approach}
\subsection{Preliminaries} \label{subsec:GA_prelim}

\begin{theorem}[Sudbury et al. \cite{Sudbury2025}]\label{theorem: NP3}
	If $n$ sticks have independent, random lengths chosen from a uniform distribution on the unit interval $[0,1]$, the probability that no three of them can form a triangle is
	\[
		\prod_{i=1}^{n}\frac{1}{F_i},
	\]
    where $F_i$ are the Fibonacci numbers defined by $F_1=1,$ $F_2=1,$ and $F_i=F_{i-1}+F_{i-2}$ for $i > 2$.
\end{theorem}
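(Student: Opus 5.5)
We prove Theorem~\ref{theorem: NP3} directly, without exponential spacings, in the spirit the section title suggests. We work with the ordered lengths $0\le l_1\le l_2\le\dots\le l_n\le 1$. Their joint density is $n!$ on the simplex. As in the proof of Theorem~\ref{theorem: PN_{p+1}^n}, ``no three sticks form a triangle'' is equivalent to $l_{i-2}+l_{i-1}\le l_i$ for $3\le i\le n$. The reason is that the most favourable triple with largest stick $l_i$ uses the two next-largest sticks. Hence
\[
PN_3^n = n!\int_{R_n} dl_1\cdots dl_n,\qquad R_n=\{0\le l_1\le l_2,\ l_{i-2}+l_{i-1}\le l_i\ (i\ge3),\ l_n\le 1\}.
\]
So it suffices to show $\mathrm{vol}(R_n)=\frac{1}{n!}\prod_{i=1}^n F_i^{-1}$.

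The key step is a linear change of variables that turns $R_n$ into a standard simplex. Set $y_1=l_1$, $y_2=l_2-l_1$, and $y_i=l_i-l_{i-1}-l_{i-2}$ for $i\ge3$. This map is lower triangular with unit diagonal, so it has Jacobian $1$. It sends $R_n$ to $\{y_i\ge0,\ l_n\le1\}$, where $l_n$ must be written as a linear form in the $y_i$.

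An induction on the recursion $l_i=l_{i-1}+l_{i-2}+y_i$ gives the coefficients. The contribution of $y_k$ to $l_n$ propagates through a Fibonacci recursion started at position $k$. For $k\ge3$ one gets $F_{n-k+1}$. For $y_1$ and $y_2$, the seeds ($l_1=y_1$, $l_2=y_1+y_2$) yield coefficients $F_n$ and $F_{n-1}$ respectively. Thus
\[
l_n=F_n y_1+F_{n-1}y_2+\sum_{k=3}^n F_{n-k+1}y_k,
\]
and the multiset of coefficients is $\{F_1,F_2,\dots,F_n\}$, where $F_1$ comes from $y_n$ and $F_2$ from $y_{n-1}$.

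The remaining step is a standard fact: the region $\{y\ge0,\ \sum c_k y_k\le1\}$ has volume $\frac{1}{n!}\prod_k c_k^{-1}$. Applying it gives $PN_3^n=\prod_{i=1}^nF_i^{-1}$.

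The main obstacle is bookkeeping the boundary terms: the two initial coefficients must come out as $F_n, F_{n-1}$ rather than shifted values. We will check these explicitly for small $n$ ($n=3$ gives $l_3=2y_1+y_2+y_3$, consistent with $F_3,F_2,F_1$) and then run the induction.

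This ``max-length constraint'' viewpoint also explains the Fibonacci numbers. Each $y_k$ is the slack in one constraint, and $F$ records how far a unit of slack is amplified in the longest stick. This is what we would generalise to $(p+1)$-gons, where the initial $p$ seeds produce the corrected terms in Theorem~\ref{theorem: PN_{p+1}^n}.
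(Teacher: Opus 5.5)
Your proof is correct, but it takes a different route from the one the paper uses for this theorem.

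The paper's proof of Theorem~\ref{theorem: NP3} stays in the original coordinates. It sets $l_i^{\min}=l_{i-1}+l_{i-2}$ and derives $l_i^{\max}=(1-F_{n-i}l_{i-1})/F_{n-i+1}$ by pushing the minimal-length chain up to $l_n=1$. It then integrates over $l_n,l_{n-1},\dots$ one variable at a time. After each step it uses the Fibonacci recurrence to rewrite the partial result as $\frac{1}{k!F_1\cdots F_k}\bigl(F_{k+1}l_{n-k}^{\max}-F_{k+1}l_{n-k}\bigr)^k$, and it finishes with $\int_0^{1/F_n}$.

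You instead apply the unit-Jacobian slack substitution $y_i=l_i-l_i^{\min}$ once. This maps the region onto the simplex $\{y\ge 0,\ \sum_k F_{n-k+1}y_k\le 1\}$, so the volume is immediate. This is exactly the ``simple proof'' of the paper's Appendix~2, specialised to $p=2$.

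The two arguments are the same computation viewed differently. The paper's $m_i=F_{n-i+1}$ are your coefficients of $y_i$ in $l_n$. The quantity $l_i^{\max}-l_i^{\min}$ is the largest admissible $y_i$ given $y_1,\dots,y_{i-1}$. The identity of Lemma~\ref{lemma: max_min}, which the triangle proof obtains from the recurrence, says that both sides equal the remaining budget $1-\sum_{k<i}F_{n-k+1}y_k$. So the paper's iterated integral is simply Fubini applied to your simplex.

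What each route buys: yours removes all the $l_i^{\max}$ bookkeeping and the step-by-step use of the recurrence. It also exhibits the Fibonacci numbers directly as the amplification factors of the slacks. The paper keeps the iterated form because it is written through Lemma~\ref{lemma: integral_PN_p} with a general density, and it is meant to display the $l_i^{\max}$ mechanism.

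Finally, the boundary issue you flag does not actually arise here. $F_n$ and $F_{n-1}$ are just $F_{n-k+1}$ at $k=1,2$, so $l_n=\sum_{k=1}^n F_{n-k+1}y_k$ holds uniformly. Corrections to the seed coefficients only appear for $p\ge 3$, namely for the indices $1\le i\le p-2$ in Lemma~\ref{lemma: mi_values}.
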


Before providing our alternative proof of Theorem \ref{theorem: NP3}, we state a lemma that is needed.

\begin{lemma}
    \label{lemma: integral_PN_p}
    Let $x_1, x_2, \dots, x_n$ be independent and identically distributed random variables defined on a probability space $(\Omega, \mathcal{F}, \mathbb{P})$ that represent the lengths of $n$ sticks. We assume that \(x_i\) follows an absolutely continuous distribution with a probability density function \(p(x)\) supported on the bounded interval \((0, M)\). If $l_1, \dots, l_n$ denote the order statistics of $x_1, \dots, x_n$, satisfying $l_1 \le l_2 \le \dots \le l_n$, then the probability that these lengths are compatible with a given requirement, R, is given by
    \begin{equation}
        PS=n!\int_{R} \int\ldots \int\int{p(l_n){dl}_np(l_{n-1}){dl}_{n-1}{\ldots p(l_2)dl}_{2}p(l_1)dl}_1,
        \label{eq:NP_p^n}
    \end{equation}
    where R represents the domain of integration (i.e. the ranges of stick lengths) compatible with the stated requirement.  
   \end{lemma}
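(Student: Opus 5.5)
The identity in Lemma \ref{lemma: integral_PN_p} rests on two facts. The joint density of the order statistics is $n!\prod p(l_i)$ on the ordered simplex. The requirement $R$ is a property of the multiset of lengths only, so it depends only on the sorted vector.

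\textbf{Step 1: write the probability as an integral.} The sample $(x_1,\dots,x_n)$ has joint density $\prod_{i=1}^n p(x_i)$ on $(0,M)^n$, by independence and absolute continuity. The event that the lengths satisfy $R$ is a measurable set $E\subset(0,M)^n$. Because the requirement (here, that no $p+1$ sticks form a $(p+1)$-gon) refers only to which lengths are present and not to how the sticks are labelled, $E$ is invariant under every permutation $\sigma\in S_n$ of the coordinates. Hence $PS=\int_E\prod_i p(x_i)\,dx$.

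\textbf{Step 2: split into ordering chambers.} Partition $(0,M)^n$, up to a null set, into the $n!$ open chambers $C_\sigma=\{x_{\sigma(1)}<\dots<x_{\sigma(n)}\}$. The discarded set is where some coordinates coincide. It is contained in finitely many hyperplanes $\{x_i=x_j\}$, so it has Lebesgue measure zero and hence probability zero, since the distribution is absolutely continuous. This is the one point where that hypothesis is genuinely used.

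\textbf{Step 3: map each chamber to the identity chamber.} On $C_\sigma$, use the change of variables $l_k=x_{\sigma(k)}$. This is a coordinate permutation, so its Jacobian has absolute value $1$. The density $\prod_i p(x_i)=\prod_k p(l_k)$ is symmetric, so it is unchanged. By the permutation invariance of $E$, the map sends $E\cap C_\sigma$ onto $E\cap C_{\mathrm{id}}$. Each of the $n!$ chambers therefore contributes the same integral
\[
\int_{E\cap\{l_1<\dots<l_n\}}\prod_{k=1}^n p(l_k)\,dl .
\]
This gives $PS=n!\int_{E\cap C_{\mathrm{id}}}\prod_k p(l_k)\,dl$.

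\textbf{Step 4: rewrite as an iterated integral.} $E\cap C_{\mathrm{id}}$ is exactly the domain $R$ of sorted lengths compatible with the requirement. By Fubini--Tonelli, which applies because the integrand is nonnegative, the integral can be written as the iterated integral in (\ref{eq:NP_p^n}). The limits of each $l_k$ are determined by $R$ and the outer variables.

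\textbf{Main obstacle.} Only Step 3 needs care: checking that $E$ really is permutation-invariant. For the polygon requirement this holds because "no $p+1$ of the sticks form a polygon" quantifies over all subsets of the multiset of lengths. Ties and boundary points are handled by the measure-zero remark in Step 2.
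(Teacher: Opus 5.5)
Your proof is correct, but it takes a different route from the paper's. The paper builds the joint density of the order statistics by sequential conditioning. The longest stick lies in $[l_n,l_n+dl_n]$ with probability $n\,p(l_n)\,dl_n\,\{P(x\le l_n)\}^{n-1}$. Given this, the remaining $n-1$ sticks are taken to be i.i.d.\ from the renormalised density $p(x)/P(x\le l_n)$, and the next longest is peeled off in the same way, and so on. The powers of the distribution function telescope to leave $n!\,p(l_1)\,dl_1\cdots p(l_n)\,dl_n$, which is then integrated over $R$.

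You instead symmetrise. You split $(0,M)^n$ into the $n!$ ordering chambers and transport each onto $\{l_1<\dots<l_n\}$ by a coordinate permutation (unit Jacobian, symmetric density, symmetric event). You then discard ties as a null set and finish with Tonelli.

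What each approach buys:
\begin{itemize}
\item Your version is fully rigorous. The paper's relies on infinitesimal $dl$ reasoning and asserts without proof the standard fact that, conditional on the maximum, the other lengths are i.i.d.\ from the truncated density.
\item In exchange, the paper's derivation gives a concrete picture of assembling the sorted sample one stick at a time, which mirrors the stick-by-stick integration used later.
\item Your write-up also shows that boundedness of the support is never used. The formula therefore applies verbatim to the unbounded densities of Section~\ref{subsec:Unbounded}, without the truncation-at-$M$ and dominated-convergence step the paper uses there.
\end{itemize}

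Finally, the permutation invariance you single out as the main obstacle holds for every requirement, not just the polygon one. The lemma defines the event through the order statistics, i.e.\ as $\{x:\operatorname{sort}(x)\in R\}$, which is symmetric by construction.
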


\begin{proof}[Proof of Theorem \ref{theorem: NP3}]
	In order to satisfy the condition of not being able to form a triangle, our alternative geometric proof uses the same constraint as Sudbury et al. \cite{Sudbury2025}: after the sticks have been sorted in order of increasing length, the lengths of all consecutive triples must satisfy $l_{i-2}+l_{i-1} \leq l_{i}$. This constraint sets the limits on the minimum lengths of each stick compatible with the condition that it is not possible to form a triangle. Consequently, making the assignment $l_0=0$,
    \begin{equation}
    \begin{aligned}
    l_1^{\text{min}} &=0, \\
    l_i^{\text{min}} &=l_{i-1}+l_{i-2},  
    \label{eq: p=3,l_i_min}
    \end{aligned}
    \end{equation}
    for $i \geq 2$.
    
    In addition, we use the fact that the above limits on minimum stick lengths also place a constraint on the maximum length of each stick that is dependent on the length of the next shortest stick. If the $(i-1)$-th and $i$-th sticks have lengths $l_{i-1}$ and $l_i$, then the lengths of all subsequent sticks must satisfy the following conditions
    \[
    \begin{aligned}
    &l_{i+1} \ge {l_{i-1}+ l_i},\\
    &l_{i+2} \ge l_{i-1}+\ {2l}_i, \\
   & l_{i+3} \ge 2l_{i-1}+\ {3l}_i, \\
    \vdots \\
    &l_n \ge F_{n-i}l_{i-1}+\ {F_{n-i+1}l}_i.
    \end{aligned}
    \]
    However, for sticks drawn from a uniform distribution on $[0,1],$ $l_n\leq l_n^{\text{max}}$ and $l_n^{\text{max}}=1$ and therefore, for a given value of $l_{i-1}$, the maximum length of the $i$th stick ($i \leq n-1$) is given by
    \begin{equation}
        l_i^{\text{max}}=\frac{1-F_{n-i}l_{i-1}}{F_{n-i+1}}.
        \label{eq:max_stick3}
    \end{equation}
    For $i \leq (n-1)$, this equation gives the maximum length of each stick in terms of the length of its next shortest stick and the Fibonacci numbers. The expression reveals the reason why the Fibonacci numbers appear in (\ref{eq:no_triangle}) – it is because they arise in the determination of the maximum stick lengths compatible with the requirement for it not to be possible to form a triangle.

    The $l_i^{\text{min}}$ and $l_i^{\text{max}}$ values given by (\ref{eq: p=3,l_i_min}) (\ref{eq:max_stick3}) define the Lemma (\ref{lemma: integral_PN_p} domain of integration compatible with our requirement that no three sticks out of \(n\) can form a triangle. We now proceed to calculate the associated probability, $PN_3^n$, by evaluating the integrals in (\ref{eq:NP_p^n}) over this domain and using \(p(l)=1\) for a uniform distribution on \([0,1]\). 
    Evaluating the first (innermost) integral and then using $F_1=F_2=1$,
    \begin{multline*}
        \int_{l_{n-2}+l_{n-1}}^1 dl_n = 1-l_{n-2}-l_{n-1} = \frac{1}{F_1}(1-F_1 l_{n-2} - F_2 l_{n-1}) \\ =\frac{1}{F_1}(F_2 l_{n-1}^{\text{max}} - F_2 l_{n-1}).
    \end{multline*}
    The second integral then becomes
    \begin{multline*}
        \frac{1}{F_1}\int_{l_{n-3}+l_{n-2}}^{l_{n-1}^{\text{max}}}{\left(F_2l_{n-1}^{\text{max}}- F_2l_{n-1}\right){dl}_{n-1 }} = \frac{1}{{2F}_1F_2}\left(1- F_1l_{n-2}- F_2l_{n-2}- F_2l_{n-3}\right)^2 \\ =\frac{1}{{2F}_1F_2}\left(F_3l_{n-2}^{\text{max}}- F_3l_{n-2}\right)^2.
    \end{multline*}
    Continuing in this manner to progressively evaluate all but the final integral in (\ref{eq:NP_p^n}) gives,
    \[
        PN_3^n = \frac{n!}{\left(n-1\right)!F_1\ldots F_{n-1}}\int_{0}^{l_1^{\text{max}}}{\left( F_nl_1^{\text{max}}- F_nl_1\right)^{n-1}{dl}_{1}}.
    \]
    Evaluating the final integral using $l_1^{\text{max}}=1/F_n$, we find
    \begin{equation*}
    PN_3^n = \prod_{i=1}^{n}\frac{1}{F_i}. \qedhere
    \end{equation*}
\end{proof}



\begin{proof}[Proof of Lemma \ref{lemma: integral_PN_p}]
    For $n$ sticks with lengths drawn randomly from a probability density function $p(x)$, the probability that any one stick of the $n$ sticks has a length between $l_n$ and $l_n+{dl}_n$ and all other $n-1$ sticks are shorter than $l_n$ is
    \[
        np(l_n)dl_n\left\{P(x\le l_n)\right\}^{n-1}
    \]
    where \(P(x\le l_n) = \int_0^xp(x)dx.\)    
    In this case, the $n-1$ shortest sticks have lengths \(\le l_n\), drawn from a renormalised probability density function given by \({p(x)}\text{/}{P(\le l_n)}\), 
    and the probability that the second longest stick has a length between $l_{n-1}$ and $l_{n-1}+dl_{n-1}$ and all the other sticks are shorter than $l_{n-1}$ is
    \[
        (n-1)\left(\frac{p(l_{n-1})}{(P(x\le l_n)}\right)dl_{n-1}\left(\frac{(P(x\le l_{n-1})}{(P(x\le l_n)}\right)^{n-2}.
    \]
    Continuing this process, we see that the probability that the $n$ sticks have lengths between $l_{i}$ and $dl_i$, for $1 \leq i \leq n$, in order of increasing length is       
   \begin{equation}
         n!p(l_1){dl}_1\dots \ p(l_n){dl}_n.
        \label{eq: prob_li_dli}
    \end{equation}
    
    To find the probability that the lengths of the sticks satisfy some requirement, $R$, we simply integrate (\ref{eq: prob_li_dli}) over the ranges of stick lengths compatible that requirement to give
    \begin{equation*}
        PS=n!\int_{R} \int\ldots \int\int{p(l_n){dl}_np(l_{n-1}){dl}_{n-1}{\ldots p(l_2)dl}_{2}p(l_1)dl}_1,
            \end{equation*}
    where R represents the domain of integration (i.e. the ranges of stick lengths) compatible with the stated requirement. 
\end{proof}

\subsection{Main Result}
\label{subsec:geom_main_result}
Our alternative geometric proof of Theorem \ref{theorem: PN_{p+1}^n} is based on Lemma \ref{lemma: integral_PN_p} plus the following sequence of lemmas, each building on the previous one.

\begin{lemma}
    The minimum lengths for each stick compatible with the condition of not being able to form a $(p+1)$-gon are given by
    \begin{equation}
        l_i^{\text{min}} =
        \begin{dcases}
        0 & \text{for } i=1, \\
        l_{i-1} & \text{for } 1 < i < p+1, \\
        \sum_{j=1}^{p} l_{i-j} & \text{for } i \ge p+1.
        \end{dcases}
        \label{eq: p-gon_li_min}
    \end{equation}
    \label{lemma: l_min}
\end{lemma}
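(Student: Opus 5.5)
We reduce the geometric condition ``no $p+1$ of the sticks can form a $(p+1)$-gon'' to a condition on consecutive $(p+1)$-tuples of the sorted lengths $l_1\le l_2\le\dots\le l_n$. Then we read off the lower limit of each $l_i$ given its predecessors.

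\textbf{Polygon criterion.} Recall that $p+1$ positive lengths form a $(p+1)$-gon if and only if the longest is strictly less than the sum of the others. So a chosen set of $p+1$ sticks fails to form a polygon exactly when its longest member is at least the sum of the remaining $p$.

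\textbf{Reduction to consecutive windows.} Fix an index $i\ge p+1$ and take any $p+1$ sticks whose largest index is $i$. The other $p$ sticks have indices below $i$. Because the lengths are sorted, their total is at most $\sum_{j=1}^{p} l_{i-j}$, the sum of the $p$ largest lengths below $l_i$. Hence every such $(p+1)$-subset fails to form a polygon if and only if
\[
l_i \ge \sum_{j=1}^{p} l_{i-j}.
\]
The ``only if'' direction holds because the window $\{l_{i-p},\dots,l_i\}$ is itself one of the admissible subsets. Every $(p+1)$-subset has some largest index $i\ge p+1$. Therefore the global condition is equivalent to these inequalities holding for all $i\ge p+1$. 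This is the same reduction used in (\ref{first f}) of the matrix proof, and in the triangle case (\ref{eq: p=3,l_i_min}).

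\textbf{Reading off the minima.} For $i\ge p+1$, the condition gives $l_i^{\text{min}}=\sum_{j=1}^{p} l_{i-j}$. For $1<i<p+1$, no $(p+1)$-subset has largest index $i$, so the only constraint on $l_i$ is the ordering constraint $l_i\ge l_{i-1}$; hence $l_i^{\text{min}}=l_{i-1}$. For $i=1$, the only constraint is positivity, so $l_1^{\text{min}}=0$. This yields (\ref{eq: p-gon_li_min}).

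\textbf{Main subtlety.} The one delicate point is showing that checking consecutive windows suffices, that is, that the sum of the $p$ largest lengths below $l_i$ dominates the sum over any other choice of $p$ lengths below $l_i$. This follows directly from monotonicity of the ordered lengths. Ties and degenerate polygons occur with probability zero, so they do not affect the integration domain in Lemma~\ref{lemma: integral_PN_p}.
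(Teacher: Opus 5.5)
Your proposal is correct and follows the same route as the paper: you reduce the condition to the consecutive-window inequalities $l_{i-p}+\dots+l_{i-1}\le l_i$ on the sorted lengths, then read off the lower limits, using only the ordering constraint for $1<i<p+1$. You also supply the justification the paper leaves implicit (the window of the $p$ largest shorter sticks dominates any other choice of $p$ shorter sticks, and the window is itself an admissible subset); the only thing worth adding is that for $i\ge p+1$ the ordering constraint $l_i\ge l_{i-1}$ is implied by the window constraint, since all lengths are nonnegative.
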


\begin{lemma}
    The maximum lengths for each stick compatible with the condition of not being able to form a $(p+1)$-gon are given by
    \begin{equation}
        l_i^{\text{max}}=
        \begin{dcases}
         \frac{1}{m_1}    &  \text{for } i=1, \\
    \frac{1}{m_i}-\frac{f_i\left(l_1,\ldots,l_{i-1}\right)}{m_i} & \text{for } 1<i\leq n-1, \\
    \frac{1}{m_n}  & \text{for } i=n,
        \end{dcases}
        \label{eq: p-gon_li_max}
    \end{equation}
    where the $f_i$’s are linear functions of the $l_i$’s with integer coefficients (which in some cases are equal to zero) and $m_1$ and the $m_i$'s are positive (non-zero) integer constants with $m_n$=1.
    \label{lemma: l_max}
\end{lemma}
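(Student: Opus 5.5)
We prove Lemma \ref{lemma: l_max} by propagating the minimum-length constraints of Lemma \ref{lemma: l_min} forward from stick $i$ to stick $n$, exactly as in the triangle case leading to (\ref{eq:max_stick3}). Fix $i$ with $1\le i\le n-1$ and suppose $l_1,\dots,l_i$ are given. By Lemma \ref{lemma: l_min}, every later stick satisfies $l_k\ge l_k^{\text{min}}$. Replacing each inequality by equality gives the smallest admissible lengths $l_{i+1},\dots,l_n$ that are consistent with $l_1,\dots,l_i$. We define these recursively by $\lambda_k=l_k$ for $k\le i$ and, for $k>i$, $\lambda_k=\lambda_{k-1}$ when $k<p+1$ and $\lambda_k=\sum_{j=1}^{p}\lambda_{k-j}$ when $k\ge p+1$.

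We then show by induction on $k$ that $l_k\ge \lambda_k$ for all $k>i$ whenever the constraints hold. The key point is monotonicity: each $\lambda_k$ is a linear form with nonnegative integer coefficients in $l_1,\dots,l_i$. Consequently each feasible $l_{k-j}\ge\lambda_{k-j}$ gives $\sum_j l_{k-j}\ge\sum_j\lambda_{k-j}$. Because the recursion only adds earlier terms, we can write $\lambda_n=m_i l_i+f_i(l_1,\dots,l_{i-1})$, where $m_i\ge1$ and $f_i$ has nonnegative integer coefficients. The coefficient satisfies $m_i\ge 1$ because $l_i$ appears with coefficient $1$ in $\lambda_{i+1}$, and all coefficients are nondecreasing along the recursion, since each $\lambda_k$ for $k\ge p+1$ contains $\lambda_{k-1}$ as a summand.

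Since the support is $(0,M)$ with $M=1$ for the uniform case, feasibility requires $\lambda_n\le l_n\le 1$. This gives $l_i\le (1-f_i)/m_i$. Conversely, this bound is attained: taking all later sticks at their minimum values $\lambda_k$ is a configuration in which no $(p+1)$-gon can be formed. This follows from the ordered criterion used earlier, namely that $\sum_{j=1}^{p}l_{k-j}\le l_k$ for consecutive blocks suffices. So the bound in (\ref{eq: p-gon_li_max}) is sharp.

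The boundary cases follow directly. For $i=n$, nothing follows stick $n$, so $l_n^{\text{max}}=1$ and $m_n=1$. For $i=1$, $\lambda_n$ depends only on $l_1$, so $f_1=0$. Here $m_1$ is the number obtained by running the recursion from the all-ones start, which is the $p$-step Fibonacci-type count appearing in Theorem \ref{theorem: PN_{p+1}^n}.

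The main obstacle is bookkeeping rather than any conceptual difficulty. Lemma \ref{lemma: l_min} has three regimes, $i=1$, $1<i<p+1$ and $i\ge p+1$, so the seed of the recursion differs depending on whether $i<p$. We therefore have to check that the linear form $\lambda_n$ still has integer coefficients with positive $m_i$ in every regime. We also have to check that some coefficients of $f_i$ may vanish, for example when $i-p<1$ some earlier lengths do not enter. We would handle this by treating all regimes uniformly with the convention $l_0=l_{-1}=\dots=0$, under which the middle case $l_i^{\text{min}}=l_{i-1}$ coincides with the general sum formula. The explicit coefficients are left to the algorithm of the Appendix, since the lemma only asserts their form.
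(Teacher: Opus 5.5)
Your proposal is correct and follows the paper's own proof. Both propagate the minimum-length constraints of Lemma \ref{lemma: l_min} from stick $i$ up to stick $n$, write the resulting bound as $l_n\ge m_i l_i+f_i(l_1,\dots,l_{i-1})$, and then impose $l_n\le 1$. Your induction $l_k\ge\lambda_k$, your argument that $m_i\ge 1$, and your attainability check are useful details the paper leaves implicit.

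One correction: drop the closing claim that the convention $l_0=l_{-1}=\dots=0$ makes $l_k^{\text{min}}=l_{k-1}$ coincide with the sum formula. It is false for $3\le k\le p$. For $p=3$ it would impose $l_3\ge l_1+l_2$, and for $n=4$ it would give $m_1=4=F_4^3$ instead of the true $m_1=3$, losing exactly the correction terms of Lemma \ref{lemma: mi_values}. Your piecewise recursion already handles every regime, so no uniformisation is needed.
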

\begin{lemma}
    The constants $m_i$ are given by 
    \[
    m_i=
    \begin{dcases}
    F_{n-i+1}^p-\sum_{j=1}^{p-i-1}{jF_{n-i-j}^p}  & \text{for } 1\le i\ \le p-2, \\
    F_{n-i+1}^p & \text{for } p-1 \leq i \leq n. \\
    \end{dcases}
    \]
    \label{lemma: mi_values}
\end{lemma}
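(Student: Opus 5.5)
The plan is to identify $m_i$ as the coefficient of $l_i$ in the smallest possible value of $l_n$, once $l_1,\ldots,l_i$ are fixed. Fix $l_1,\ldots,l_i$ and let every later stick take its minimum value from Lemma \ref{lemma: l_min}, so that $l_{k}=l_k^{\text{min}}$ for $k=i+1,\ldots,n$. Each later length is a nonnegative combination of earlier ones, so by induction this choice simultaneously minimises every $l_k$ with $k>i$, and in particular $l_n$. By linearity, $l_n$ then equals $m_i l_i + f_i(l_1,\ldots,l_{i-1})$. The bound $l_n\le 1$ gives exactly the form of $l_i^{\text{max}}$ in Lemma \ref{lemma: l_max}. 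So I will take this as the defining construction of $m_i$ and $f_i$. Consequently $m_i=c_n$, where the sequence $(c_k)$ records the coefficient of $l_i$ in $l_k$. It is defined by:
\begin{itemize}
\item $c_k=0$ for $k<i$, and $c_i=1$;
\item $c_k=c_{k-1}$ for $i<k<p+1$;
\item $c_k=\sum_{j=1}^p c_{k-j}$ for $k\ge p+1$.
\end{itemize}
Here $l_1,\ldots,l_{i-1}$ are treated as independent variables that contribute nothing to $c_k$. The case $i=n$ gives $m_n=c_n=1=F_1^p$ trivially.

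\emph{Case $p-1\le i\le n$.} For $i\ge p$, every $k>i$ satisfies $k\ge p+1$, so $(c_k)$ obeys the $p$-step recurrence for all $k>i$. It starts from a window of $p$ entries consisting of zeros followed by a single $1$ at position $i$. This is exactly the window $F^p_{-p+2},\ldots,F^p_1$ shifted by $i-1$. Hence $c_k=F^p_{k-i+1}$ and $m_i=F^p_{n-i+1}$. For $i=p-1$, the single non-recurrent step gives $c_p=c_{p-1}=1$. This agrees with $F_2^p=F_1^p+0+\cdots=1$, so the same identification holds.

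\emph{Case $1\le i\le p-2$.} Here I will use a superposition (Duhamel) argument. Compare $(c_k)$ with the sequence $F^p_{k-i+1}$, which obeys the recurrence from the start. At each index $k$ with $i+2\le k\le p$, the recurrence would give the sum of the preceding window, which contains $k-i$ ones. The actual rule gives $c_k=1$ instead. So at step $k$ a deficit $e_k=k-i-1$ is injected. Before this step the two sequences agree, because $c_{i+1}=1=F_2^p$.

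By linearity of the recurrence, a unit perturbation injected at index $k$ and then propagated by the recurrence evolves as $F^p_{m-k+1}$ at index $m$. Superposing all the deficits gives
\[
c_n=F^p_{n-i+1}-\sum_{k=i+2}^{p}(k-i-1)F^p_{n-k+1}.
\]
Substituting $j=k-i-1$, which runs from $1$ to $p-i-1$, turns this into the claimed $F^p_{n-i+1}-\sum_{j=1}^{p-i-1}jF^p_{n-i-j}$. As a consistency check, this should agree with Lemma \ref{lemma A}, namely with $R_i^{n-p+1}$ after the index shift used at the end of the matrix proof.

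\emph{Expected obstacle.} The main subtlety is the first step. One must justify that the maximum of $l_i$ is governed solely by the chain in which every later stick sits at its minimum. Equivalently, one must check that no other constraint $l_k\le 1$ with $k<n$ is tighter. This follows because $l_n\ge l_k$ along the minimal chain, and all coefficients in the chain are nonnegative. The injected-deficit bookkeeping is the other place to be careful: in particular, one must confirm that the window at step $k$ contains exactly $k-i$ ones, because $i\ge1$ keeps every nonzero entry inside the window when $k\le p$.
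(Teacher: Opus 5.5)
Your proof is correct. It ends at the same decomposition the paper uses: the coefficient column of $l_i$ equals $F^p_{k-i+1}$ minus $j$ copies of the shifted sequence $F^p_{k-i-j}$, which starts at row $i+j+1$, for $j=1,\ldots,p-i-1$. With $q=p-i+1$, this is the paper's identity (\ref{eq: C1_qq_express}), $S_{0,q}(1)=S_{0,q}(F^p_i)-\sum_{j=1}^{q-2}jS_{j+1,q-j-1}(F^p_i)$, continued down to row $n$.

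The two routes differ in how the weights $j$ are obtained.
\begin{itemize}
\item The paper manipulates the finite column of ones. It uses the doubling relation $F^p_{m+1}=2F^p_m$ for small $m$ and two rounds of recursion on the $S$-sequences.
\item Appendix~2 of the paper instead states the formula and checks the $p$ initial values, using the identity $2^{t+1}-\sum_{j=1}^{t}j2^{t-j}-t-1=1$.
\item Your discrete Duhamel bookkeeping reads each weight off directly. It is the mismatch between the window sum, which contains $k-i$ ones, and the imposed value $1$.
\end{itemize}

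Your route needs no knowledge of the small values of $F^p_m$ (the powers of two), and it shows why the weights are exactly $1,2,\ldots,p-i-1$. The one thing to state precisely is the superposition principle. The deficits must be measured against the actual window of $c$, as you in fact do. The comparison sequence must share the initial window $(0,\ldots,0,1)$, which holds since $F^p_{2-p}=\cdots=F^p_0=0$. Your sentence ``before this step the two sequences agree'' is literally true only up to $k=i+1$, so rephrase it. Your consistency check against $R_i^{n-p+1}$ from Lemma \ref{lemma A}, after $j\mapsto p-i-j$, is also correct.
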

\begin{lemma}
    For $i\geq2$,
    \[
    m_i\left(l_i^{\text{max}}-l_i^{\text{min}}\right)=m_{i-1}\left(l_{i-1}^{\text{max}}-l_i\right).
    \]
    \label{lemma: max_min}
\end{lemma}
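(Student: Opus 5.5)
The plan is to derive the identity directly from the way $l_i^{\text{max}}$ in Lemma~\ref{lemma: l_max} is constructed, namely as the solution of a linear equation obtained by pushing every subsequent stick down to its minimum length. I read the right-hand side as $m_{i-1}\left(l_{i-1}^{\text{max}}-l_{i-1}\right)$, i.e.\ $l_{i-1}^{\text{max}}$ minus the current value of the $(i-1)$-th length. This is the form checked below in the triangle case, and the form needed to continue the nested integration of Lemma~\ref{lemma: integral_PN_p}. With the literal $l_i$ on the right, the identity fails already for $p=2$.

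\textbf{Step 1 (the chain function).} For fixed $l_1,\dots,l_i$, define $g_i(l_1,\dots,l_i)$ as the value of $l_n$ obtained by setting $l_k=l_k^{\text{min}}$ for every $k=i+1,\dots,n$, using the rule of Lemma~\ref{lemma: l_min}. Each $l_k^{\text{min}}$ is a sum of earlier lengths with coefficient $1$, so by induction on $k$ every such value is a linear form in $l_1,\dots,l_i$ with nonnegative integer coefficients. Hence $g_i$ is linear in $l_1,\dots,l_i$ with nonnegative coefficients.

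\textbf{Step 2 (identify $m_i$ and $f_i$).} Write $g_i=m_i l_i+f_i(l_1,\dots,l_{i-1})$. Because of the nonnegative coefficients, choosing any $l_k>l_k^{\text{min}}$ for some $k>i$ can only increase all later minima. So $g_i$ is the smallest achievable $l_n$ given $l_1,\dots,l_i$. The constraint $l_n\le 1$ then gives exactly $l_i\le (1-f_i)/m_i$, which matches (\ref{eq: p-gon_li_max}). Equivalently, $m_i l_i^{\text{max}}+f_i=1$, and I take this as the definition of $m_i,f_i$ consistent with Lemma~\ref{lemma: l_max}. The coefficient $m_i$ is positive because $l_i$ feeds into $l_{i+1}^{\text{min}}$ (or equals $l_n$ when $i=n$).

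\textbf{Step 3 (the key consistency relation).} The greedy chain starting after stick $i-1$ first sets $l_i=l_i^{\text{min}}(l_1,\dots,l_{i-1})$ and then proceeds exactly as the chain starting after stick $i$. Therefore
\[
g_{i-1}(l_1,\dots,l_{i-1})=g_i\bigl(l_1,\dots,l_{i-1},l_i^{\text{min}}\bigr).
\]
This holds uniformly across the cases of Lemma~\ref{lemma: l_min}, including $1<i<p+1$ where $l_i^{\text{min}}=l_{i-1}$.

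\textbf{Step 4 (conclude).} Combining Steps 2 and 3:
\[
m_i\left(l_i^{\text{max}}-l_i^{\text{min}}\right)=1-f_i-m_i l_i^{\text{min}}=1-g_{i-1}(l_1,\dots,l_{i-1})=m_{i-1}l_{i-1}^{\text{max}}+f_{i-1}-m_{i-1}l_{i-1}-f_{i-1},
\]
which equals $m_{i-1}\left(l_{i-1}^{\text{max}}-l_{i-1}\right)$. As a sanity check, for $p=2$ one has $m_i=F_{n-i+1}$ and $l_i^{\text{max}}=(1-F_{n-i}l_{i-1})/F_{n-i+1}$. The left side becomes $1-F_{n-i+2}l_{i-1}-F_{n-i+1}l_{i-2}$, which is indeed $m_{i-1}(l_{i-1}^{\text{max}}-l_{i-1})$.

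The main obstacle is Step 2: making rigorous that the greedy chain really realises the maximum $l_i^{\text{max}}$ of Lemma~\ref{lemma: l_max}, i.e.\ that the $m_i$ and $f_i$ there are exactly the coefficients of $g_i$. I would handle this with the monotonicity argument above, by induction on $n-k$.
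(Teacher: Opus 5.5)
Your argument is correct and is essentially the paper's. Both compare the chain in which every stick after $i$ sits at its minimum with the chain in which every stick from $i$ onward does, with $l_n=1$ saturated. You are also right that the identity only holds, and is only proved in the paper, with $l_{i-1}$ rather than $l_i$ on the right. Your consistency relation $g_{i-1}(l_1,\ldots,l_{i-1})=g_i(l_1,\ldots,l_{i-1},l_i^{\text{min}})$ is a cleaner rendering than the paper's two ``Sequence'' equations, which as written give $l_{i-1}$ the coefficient $m_{i-1}$ in Sequence 1 (for $p=2$ that coefficient is $F_{n-i}$, not $F_{n-i+2}$). It also covers $i=n$ without the paper's separate check.
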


\begin{lemma}
    The probability of not being able to form a $(p+1)$-gon from any $p+1$ out of $n$ sticks is given by
    \[
    {PN}_{p+1}^n=\prod_{i=1}^{n}{\frac{1}{m_i}.}
    \]
    \label{lemma: PN_mi}
\end{lemma}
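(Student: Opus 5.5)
We prove Lemma \ref{lemma: PN_mi} by the same scheme as the triangle proof of Theorem \ref{theorem: NP3}. Apply Lemma \ref{lemma: integral_PN_p} with $p(l)=1$ on $[0,1]$. The domain $R$ is the nested region $l_i^{\text{min}}\le l_i\le l_i^{\text{max}}$, with the minima from Lemma \ref{lemma: l_min} and the maxima from Lemma \ref{lemma: l_max}. This gives
\[
PN_{p+1}^n=n!\int_0^{l_1^{\text{max}}}\int_{l_2^{\text{min}}}^{l_2^{\text{max}}}\cdots\int_{l_n^{\text{min}}}^{l_n^{\text{max}}}dl_n\cdots dl_2\,dl_1 .
\]
The region cut out by the lower limits $l_i\ge l_i^{\text{min}}$ and the single upper limit $l_n\le 1$ is exactly the admissible set. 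The upper limits $l_i^{\text{max}}$ for $i<n$ are redundant but harmless: they are the largest values for which the remaining lower limits can still be met with $l_n\le1$. So the integral equals the desired probability.

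The core of the proof is an induction, integrating from the innermost variable outward, with the hypothesis
\[
J_k:=\int_{l_{n-k+1}^{\text{min}}}^{l_{n-k+1}^{\text{max}}}\!\!\cdots\int_{l_n^{\text{min}}}^{l_n^{\text{max}}}dl_n\cdots dl_{n-k+1}
=\frac{m_{n-k}^{\,k}}{k!\prod_{i=n-k+1}^{n}m_i}\bigl(l_{n-k}^{\text{max}}-l_{n-k}\bigr)^k .
\]

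For the base case $k=1$, we have $J_1=l_n^{\text{max}}-l_n^{\text{min}}$, since $m_n=1$. Lemma \ref{lemma: max_min} with $i=n$ rewrites this as $m_{n-1}(l_{n-1}^{\text{max}}-l_{n-1})/m_n$.

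For the inductive step, integrate $J_k$ in $l_{n-k}$ from $l_{n-k}^{\text{min}}$ to $l_{n-k}^{\text{max}}$. The integrand is a power of $(l_{n-k}^{\text{max}}-l_{n-k})$. Its integral is $(l_{n-k}^{\text{max}}-l_{n-k}^{\text{min}})^{k+1}/(k+1)$, and this needs one fact: $l_{n-k}^{\text{max}}$ does not depend on $l_{n-k}$. That holds because, by Lemma \ref{lemma: l_max}, $f_{n-k}$ involves only $l_1,\dots,l_{n-k-1}$. Lemma \ref{lemma: max_min} with $i=n-k$ then turns $m_{n-k}(l_{n-k}^{\text{max}}-l_{n-k}^{\text{min}})$ into $m_{n-k-1}(l_{n-k-1}^{\text{max}}-l_{n-k-1})$. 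After this substitution the powers of $m$ collect into exactly the claimed form of $J_{k+1}$.

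After $n-1$ steps we are left with
\[
PN_{p+1}^n=\frac{n!\,m_1^{n-1}}{(n-1)!\prod_{i=2}^n m_i}\int_0^{l_1^{\text{max}}}(l_1^{\text{max}}-l_1)^{n-1}dl_1
=\frac{n\,m_1^{n-1}}{\prod_{i=2}^n m_i}\cdot\frac{(1/m_1)^n}{n}=\prod_{i=1}^n\frac1{m_i},
\]
using $l_1^{\text{max}}=1/m_1$ and $l_1^{\text{min}}=0$.

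The main obstacle is the legitimacy of the nested integration, not the bookkeeping. There are two points to check. First, $f_i$, and hence $l_i^{\text{max}}$, must depend only on earlier lengths. Second, the variable $l_{n-k}$ must enter the integrand only through the lower limit $l_{n-k+1}^{\text{min}}$. In particular, $l_{n-k}$ must not also appear in the lower limits of later variables in a way Lemma \ref{lemma: max_min} has not already absorbed. Since $l_j^{\text{min}}$ depends on $l_{j-1},\dots,l_{j-p}$, these dependencies are indeed present for $p\ge3$. What saves the argument is that Lemma \ref{lemma: max_min} already encodes the whole inner integral as a function of $l_{n-k}$ alone: once the inner integral is written as $(l_{n-k}^{\text{max}}-l_{n-k})^k$ times a constant, every earlier variable appears only through $l_{n-k}^{\text{max}}$. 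I would make this explicit by stating the inductive hypothesis as an identity valid for all values of $l_1,\dots,l_{n-k}$ in the admissible region, and noting that Lemma \ref{lemma: max_min} is exactly such an identity.
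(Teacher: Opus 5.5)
Your proposal is correct and is essentially the paper's own proof: Lemma~\ref{lemma: integral_PN_p} with $p(l)=1$, integration from $l_n$ outward, Lemma~\ref{lemma: max_min} applied after each step to rewrite the partial integral as a constant times $(l_{i-1}^{\text{max}}-l_{i-1})^{k}$, and a final integral using $l_1^{\text{max}}=1/m_1$. Your explicit closed form for $J_k$, stated as an identity in all of $l_1,\dots,l_{n-k}$, makes rigorous what the paper compresses into ``continue to apply the same process'', as do your remarks that the upper limits are redundant and that $l_{n-k}^{\text{max}}$ does not depend on $l_{n-k}$.
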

\begin{proof}[Proof of Theorem \ref{theorem: PN_{p+1}^n}]
     Using Lemmas \ref{lemma: PN_mi} and \ref{lemma: mi_values} leads directly to
     \begin{equation*}
         {PN}_{p+1}^n=\prod_{i=p-1}^{n}\frac{1}{F_{n-i+1}^p}\prod_{i=1}^{p-2}\frac{1}{F_{n-i+1}^p-\sum_{j=1}^{p-i-1}{jF_{n-i-j}^p}}.
     \end{equation*}
    Finally, making the transformation $i \rightarrow n-i+1$ in both products gives
    \begin{equation*}
        {PN}_{p+1}^n=\prod_{i=1}^{n-p+2}\frac{1}{F_i^{p}}\prod_{i=n-p+3}^{n}\frac{1}{F_i^{p}-\sum_{j=1}^{i-n+p-2}{jF_{i-j-1}^{p}}}. \qedhere
    \end{equation*}
 \end{proof}

\begin{proof}[Proof of Lemma \ref{lemma: l_min}]
    Once the sticks have been sorted in order of increasing length, the requirement of not being able to form a $(p+1)$-gon results in the constraint that the lengths of all consecutive $(p+1)$-tuples must satisfy
     \[
     l_{i-p}+l_{i-p+1}+l_{i-p+2}+\ldots+l_{i-1} \leq l_{i}.
     \]
     Using this constraint and also the fact that the sticks are sorted in order of length, we identify the following minimum lengths for each stick compatible with the requirement of not being able to form a $(p+1)$-gon
     \begin{equation*}
        l_i^{\text{min}} =
        \begin{dcases}
        0 & \text{for } i=1, \\
        l_{i-1} & \text{for } 1 < i < p+1, \\
        \sum_{j=1}^{p} l_{i-j} & \text{for } i \ge p+1.
        \end{dcases}
        \qedhere
    \end{equation*}
\end{proof}

\begin{proof}[Proof of Lemma \ref{lemma: l_max}]
    Adopting the same approach as for our alternative proof of Theorem \ref{theorem: NP3}, if the $i$ shortest sticks have lengths $l_1,l_2,\ldots,l_i$, then using Lemma \ref{lemma: l_min} the lengths of the subsequent sticks must satisfy the following conditions     \begin{equation*}
    \begin{aligned}
        &l_{i+1} \ge f(l_1,\ldots,l_i), \\
        &l_{i+2} \ge f\left(l_1,\ldots,l_i,l_{i+1}\right)=f(l_1,\ldots,l_i), \\
        &\vdots \\
        &l_n \ge f\left(l_1,\ldots,l_i,l_{n-1}\right)=f\left(l_1,\ldots,l_i\right)=f_i\left(l_1,\ldots,l_{i-1}\right)+m_il_i,
    \end{aligned}
    \end{equation*}
    where the $f$'s and $f_i$ are linear functions of the $l_i$’s with integer coefficients (which in some cases are equal to zero) and $m_i$ is a non-zero integer constant.
    Now, $l_n \leq l_n^{\text{max}}=1$ for stick lengths drawn from a uniform probability distribution on ${[0,1]}$, therefore
    \begin{equation*}
        l_i^{\text{max}}=
        \begin{dcases}
         \frac{1}{m_1}    &  \text{for } i=1, \\
    \frac{1}{m_i}-\frac{f_i\left(l_1,\ldots,l_{i-1}\right)}{m_i} & \text{for } 1<i\leq n-1, \\
    \frac{1}{m_n}  & \text{for } i=n,
        \end{dcases}
    \end{equation*}
    where $m_n=1.$
\end{proof}
Note, an algorithm for determining the explicit form of the functions $f_i$ is derived in the appendix, though this is not required for our proof of Theorem \ref{theorem: PN_{p+1}^n}.
\begin{proof}[Proof of Lemma \ref{lemma: mi_values}]
    For stick lengths drawn from a uniform distribution on $[0,1],$ by definition
    \begin{equation}
        m_n=1=F_1^{p}.
        \label{eq: m_n}
    \end{equation}

    Using the $l_i^{\text{min}}$ values given by Lemma \ref{lemma: l_min} and the same approach that was used in the proof of Lemma \ref{lemma: l_max}, for $p+1 \leq i \leq n-1$, it is straightforward to see that
    \begin{equation}
        m_i=F_{n-i+1}^p.
        \label{eq: m_i}
    \end{equation}

    For $i<p+1$, we use the same approach to calculate $l_i^{\text{max}}=l_{p-q+1}^{\text{max}}$. With the sticks arranged in order of increasing length, we have
    \begin{flalign}
    \begin{aligned}
          &l_1 =	l_1,	\\
    &\vdots		\\
    &l_{p-q+1}	=	l_{p-q+1}, \\		
    &l_{p-q+2} \ge l_{p-q+1}, \\	
    &\vdots			\\
    &l_{p} \ge l_{p-q+1},		\\
    &l_{p+1} \ge l_1+l_2+\ldots+l_{p-q}+ql_{p-q+1},		\\
    &l_{p+2} \ge	f_{p+2}(l_1,l_2,\ldots l_{p-q})+2ql_{p-q+1},	\\	
    &\vdots			\\
    &l_n \ge	f_n(l_1,l_2,\ldots l_{p-q})+m_{p-q+1}l_{p-q+1}.
    \end{aligned}
    \label{eq: l_mins}
    \end{flalign}
    However, we would like to express $m_{p-q+1}$ in terms of the $p$-step Fibonacci numbers, $F_i^{p}$.

    To achieve this we need to transform the column of $q$ lots of $l_{p-q+1}$ above the row for $l_{p+1}$ in (\ref{eq: l_mins}) into $l_{p-q+1}$ multiplied by the sum of a number of $F_{i}^{p}$ sequences, each starting at $F_{1}^{p}$.  

    We first note that we can transform a sequence of $q$ 1's as follows
    \[
    \begin{pmatrix}
    1 \vphantom{F_1^{p}} \\
    1 \vphantom{F_1^{p}} \\
    1 \vphantom{F_1^{p}} \\
    1 \vphantom{F_1^{p}} \\
    \vdots \\
    1 \vphantom{F_1^{p}} \\
    1 \vphantom{F_1^{p}} 
    \end{pmatrix}
    \rightarrow
    \begin{pmatrix}
    F_1^{p} \\
    F_2^{p} \\
    F_3^{p} \\
    F_4^{p} \\
    \vdots \\
    F_{q-1}^{p} \\
    F_q^{p}
    \end{pmatrix}
    -F_2^{p}
    \begin{pmatrix}
    0 \vphantom{F_1^{p}} \\
    0 \vphantom{F_1^{p}} \\
    1 \vphantom{F_1^{p}} \\
    1 \vphantom{F_1^{p}} \\
    \vdots \\
    1 \vphantom{F_1^{p}} \\
    1 \vphantom{F_1^{p}}
    \end{pmatrix}
    -F_3^{p}
    \begin{pmatrix}
    0 \vphantom{F_1^{p}} \\
    0 \vphantom{F_1^{p}} \\
    0 \vphantom{F_1^{p}} \\
    1 \vphantom{F_1^{p}} \\
    \vdots \\
    1 \vphantom{F_1^{p}} \\
    1 \vphantom{F_1^{p}}
    \end{pmatrix}
    - \ldots - F_{q-1}^{p}
    \begin{pmatrix}
    0 \vphantom{F_1^{p}} \\
    0 \vphantom{F_1^{p}} \\
    0 \vphantom{F_1^{p}} \\
    0 \vphantom{F_1^{p}} \\
    \vdots \\
    0 \vphantom{F_1^{p}} \\
    1 \vphantom{F_1^{p}}
    \end{pmatrix}.
    \]
    Using ${S}_{j,q-j}(1)$ $\in \mathbb{R}^q$\ to represent a sequence of $j$ leading 0’s followed by $(q-j)$ 1’s,  and ${S}_{j,q-j}\left(F_i^{p}\right)$ to represent a sequence of $j$ leading 0’s followed by the first $(q-j) \ F_i^{p}$ numbers (starting at $F_1^{p}$), the above transformation can be written as
    \begin{equation}
        {S}_{0,q}(1)={S}_{0,q}(F_i^{p})-F_2^{p}{S}_{2,q-2}(1)-F_3^{p}{S}_{3,q-3}(1) - \ldots\ -  F_{q-1}^{p}{S}_{q-1,1}(1)
        \label{eq: C1_qq}
    \end{equation}
    and using the same approach, we also have
    \begin{equation}
        {S}_{1,q-1}(1)={S}_{1,q-1}(F_i^{p})-F_2^{p}{S}_{3,q-3}(1)-F_3^{p}{S}_{4,q-4}(1) - \ldots -  F_{q-2}^{p}{S}_{q-1,1}(1).
        \label{eq: C1_qq-1}
    \end{equation}
    We now note that we are only interested in transforming sequences containing a maximum number of $q \leq p$ numbers. Hence, for the $F_i^{p}$ numbers multiplying the ${S}_{j,k}(1)$ sequences in (\ref{eq: C1_qq}) and (\ref{eq: C1_qq-1}), $2 \leq i \leq p-1$ and, therefore, we have the recurrence relationship
    \begin{equation}
        F_{i+1}^{p} = 2F_i^{p}.
        \label{eq: rec_rel}
    \end{equation}
    
    Using (\ref{eq: rec_rel}) and subtracting (\ref{eq: C1_qq-1}) multiplied by two from (\ref{eq: C1_qq}) gives
    \[
    {S}_{0,q}(1)={S}_{0,q}(F_i^{p})-F_2^{p}{S}_{2,q-2}(1)+2({S}_{1,q-1}(1)-{S}_{1,q-1}(F_i^{p})),
    \]
    which using $F_2^{p}=1$, can be expressed as
    \begin{equation}
        {\big({S_{0,q}\left(1\right)}- {S_{1,q-1}\left(1\right)}\big)}=\big({S}_{1,q-1}(1)- {S}_{2,q-2}(1)\big)+S_{0,q}(F_i^{p})-2{S}_{1,q-1}(F_i^{p}).
        \label{eq: rec_rel16}
    \end{equation}
    Now, by definition
    \begin{equation}
        {S}_{q-1,1}(1)=S_{q-1,1}(F_i^{p})
        \label{eq: C1_q1}
    \end{equation}
    and
    \begin{equation}
        {{S}_{q-2,2}(1)=S}_{q-2,2}(F_i^{p})
        \label{eq: C1_q2}
    \end{equation}
    i.e.,
    \[
    {S}_{q-2,2}(1)-{S}_{q-1,1}(1)={S}_{q-2,2}(F_i^{p})-{S}_{q-1,1}(F_i^{p}).
    \]
    Using this starting value of ${S}_{q-2,2}(1)-{S}_{q-1,1}(1)$ and making the repeated application of the recurrence relationship given by (\ref{eq: rec_rel16}), for $q \geq 3$, we find
    \begin{equation}
        {S}_{0,q}(1)= {S}_{1,q-1}(1)+\sum_{j=1}^{q-1}{S}_{q-j,j}(F_i^{p}).
        \label{eq: simpler_rec_rel}
    \end{equation}
    Finally, starting with ${S}_{q-2,2}(1){=S}_{q-2,2}(F_i^{p})$ and making the repeated application of (\ref{eq: simpler_rec_rel}), for $q \geq 3$, we find
    \begin{equation}
        {S}_{0,q}(1)= {S}_{0,q}(F_i^{p})-\sum_{j=1}^{q-2}{jS}_{j+1,q-j-1}(F_i^{p}).
        \label{eq: C1_qq_express}
    \end{equation}
    For $q=1,2$ or $q>2$, we can now use (\ref{eq: C1_q1}), (\ref{eq: C1_q2}) and (\ref{eq: C1_qq_express}), respectively, to replace the sequence of $q$ lots of $l_{p-q+1}$ above the row for $l_{p+1}$ in (\ref{eq: l_mins}) by $l_{p-q+1}$ multiplied by sequences of ascending $F_i^{p}$ numbers.
    
    Continuing these sequences of ascending $F_i^{p}$ numbers down to the row for $l_n$ in (\ref{eq: l_mins}) gives
    \begin{equation}
        m_{p-q+1} =
        \begin{cases}
        F_{n-p+q}^{p}, & \text{for } q=1, 2, \\
        F_{n-p+q}^{p}-\sum_{j=1}^{q-2}{jF_{n-p+q-j-1}^{p}}, & \text{for } 3 \leq q \leq p.
        \end{cases}
        \label{eq: X_q q_12}
    \end{equation}

    Finally, making the transformation $p-q+1 \rightarrow i$ gives
    \begin{equation*}
        m_i=
        \begin{dcases}
        F_{n-i+1}^p-\sum_{j=1}^{p-i-1}{jF_{n-i-j}^p}  & \text{for } 1\le i\ \le p-2, \\
        F_{n-i+1}^p & \text{for } p-1 \leq i \leq n. \\
        \end{dcases}
        \qedhere
    \end{equation*}
\end{proof}
\begin{proof}[Proof of Lemma \ref{lemma: max_min}]
    We first note that (\ref{eq: p-gon_li_min}) and (\ref{eq: p-gon_li_max}) tell us that the $l_i^{\text{min}}$'s and $l_i^{\text{max}}$'s can each be expressed as a linear function of the $l_i$'s with integer coefficients. We then consider the following two sequences of stick lengths (arranged in order of increasing length).

    Let $l_1,l_2, \ldots, l_{i-1},l_i^{\text{max}},l_{i+1}^{\text{min}},l_{i+2}^{\text{min}},\ldots,l_{n-1}^{\text{min}},l_n^{\text{min}}$ be a sequence of stick lengths, which we will call Sequence 1. In this sequence, sticks $\left(i+1\right)$
    through to $n$ take their minimum permissible lengths and stick $i$ takes its maximum permissible length, which occurs when  $l_n^{\text{min}}=1$.
    
    Let $l_1,l_2,\ldots,l_{i-1}^{\text{max}},l_i^{\text{min}},l_{i+1}^{\text{min}},\ldots,l_{n-1}^{\text{min}}, l_n^{\text{min}}$ be a sequence of stick lengths, which we will call Sequence 2. In this sequence, sticks $i$ through to $n$ take their minimum permissible lengths, and stick $(i-1)$ takes on its maximum permissible length, which occurs when $l_n^{\text{min}}=1$.   
    
    Using the same approach as for the proof of Lemma \ref{lemma: l_max}, from Sequence 1 we find
    \begin{equation}
        1=m_il_i^{\text{max}}+m_{i-1}l_{i-1}+f_i(l_1,\ldots,l_{i-2}),
        \label{eq: seq1_mi}
    \end{equation}
    and from Sequence 2,
    \begin{equation}
        1=m_il_i^{\text{min}}+m_{i-1}l_{i-1}^{\text{max}}+f_i(l_1,\ldots,l_{i-2}),
        \label{eq: seq2_mi}
    \end{equation}
    where $f_i(l_1,\ldots,l_{i-2})$ is the same linear function of $l_1,\ldots,l_{i-2}$ in both (\ref{eq: seq1_mi}) and (\ref{eq: seq2_mi}).
    
    Equating the right-hand sides of (\ref{eq: seq1_mi}) and (\ref{eq: seq2_mi}) gives
    \begin{equation*}
        m_i{(l}_i^{\text{max}}-l_i^{\text{min}})=m_{i-1}{(l}_{i-1}^{\text{max}}-l_{i-1}) \text{  for  } i \leq n-1.
    \end{equation*}
    Also using the same approach as for the proof of Lemma \ref{lemma: l_max}
\[l_{n-1}^{max} = 1 -\sum_{j=2}^pl_{n-j}\]
which gives
\[l_{n-1}^{max} - l_{n-1} = 1 -\sum_{j=1}^pl_{n-j}= l_n^{max} - l_n^{min}\]
Form Lemma \ref{lemma: mi_values}, \(m_n=F_1^p=1\) and \(m_{n-1}=F_2^p=1\), and we therefore have
\begin{equation*}
        m_n{(l}_n^{\text{max}}-l_n^{\text{min}})=m_{n-1}{(l}_{n-1}^{\text{max}}-l_{n-1}) .
    \end{equation*}\qedhere
\end{proof}

It is noted that, in our alternative proof of Theorem \ref{theorem: NP3} (i.e. for $\left(p+1\right)$=3), the equivalence given by Lemma \ref{lemma: max_min} was established after performing each integral using the Fibonacci number recurrence relationships.

\begin{proof}[Proof of Lemma \ref{lemma: PN_mi}]
    Noting that, for stick lengths randomly drawn from a uniform distribution on $[0,1]$, $p(l)=1$, we use the minimum and maximum stick lengths given by Lemmas 3.4 and 3.6 to evaluate the integrals in the probability expression given by Lemma 3.2 subject to the condition of not being able to form a $(p+1)$-gon from any $p+1$ of $n$ sticks. The result of the innermost integral in (\ref{eq:NP_p^n}) contributes a factor to $PN_{p+1}^n$ which can be written as
    \begin{equation}
    \left(\frac{1}{(n-n+1)m_n}\right) m_n\left(l_n^{max}-\ l_n^{min}\right).
    \label{eq: PN_p^nInnermost}
    \end{equation}
    Using Lemma \ref{lemma: max_min}, (\ref{eq: PN_p^nInnermost}) can be rewritten as
    \[
    \left(\frac{1}{\left(n-n+1\right)m_n}\right)m_{n-1}\left(l_{n-1}^{\text{max}}-l_{n-1}\right).
    \]
    We now use this expression for the integral over $l_n$ as the integrand for the second integral in (\ref{eq:NP_p^n}), which becomes
    \[\begin{aligned}
    \frac{1}{m_n}\int_{l_{n-1}^{\text{min}}}^{l_{n-1}^{\text{max}}}{m_{n-1}}(l_{n-1}^{\text{max}}-l_{n-1}){dl}_{n-1}=\frac{1}{2m_{n}m_{n-1}}\Big(m_{n-1}\left(l_{n-1}^{\text{max}}-l_{n-1}^{\text{min}}\right)\Big)^2.
    \end{aligned}
    \]                     	
    Using Lemma \ref{lemma: max_min} again, this can be rewritten as
    \[
    \frac{1}{2m_{n}m_{n-1}}\Big(m_{n-2}\left(l_{n-2}^{\text{max}}-l_{n-2}\right)\Big)^2.
    \]
    We continue to apply the same process to progressively evaluate all but the final integral in (\ref{eq:NP_p^n}), to give
    \begin{equation}\label{integral:m_1}
    {PN}_{p+1}^n=\frac{n!}{\left(n-1\right)!m_{n}m_{n-1}m_{n-2}\ldots m_2}\int_{l_1^{\text{min}}=0}^{l_1^{\text{max}}}\left(m_{1}l_1^{\text{max}}-m_{1}l_1\right)^{n-1}{dl}_{1}
    \end{equation}
    which, using  $l_1^{\text{max}}=1/m_1$, integrates to
    \[
    {PN}_{p+1}^n=\prod_{i=1}^{n}\frac{1}{m_i}. \qedhere
    \]
\end{proof}

It is interesting to note that, by construction, the expression that results from performing the innermost integrals in equation (\ref{eq:NP_p^n}) over $l_n$ through to, say, $l_j$ represents the probability that the lengths of sticks $j$ through to $n$ all lie between their minimum and maximum values. After using Lemma \ref{lemma: max_min} to re-express the result of these innermost integrals, we find that the revised expression always goes to zero when $l_{j-1}$ is set to $l_{j-1}^{\text{max}}$. This behaviour arises because, when $l_{j-1}$ is set equal to $l_{j-1}^{\text{max}}$, this fixes the lengths of sticks $j$ to $n$ to a specific combination of exact lengths that culminate in $l_n$ being exactly equal to one (see proof of Lemma \ref{lemma: l_max}). Consequently, the ranges of integration for sticks $j$ to $n$ in (\ref{eq:NP_p^n}) are reduced to zero and, hence, the probability for this exact combination of lengths (for sticks $j$ through to $n$) goes to zero. The same behaviour was found in the evaluation of the integrals in our alternative proof of Theorem \ref{theorem: NP3} (i.e. for $p+1=3$), where we used the Fibonacci number recurrence relationships rather than Lemma \ref{lemma: max_min} to re-express the result of each integral.  

We also note that in the the case of a bounded probability distribution, taking the sample space as \([0, M]\) is essentially equivalent to taking it as \([0, 1]\). Specifically, for a sample \(x\) defined on \([0, M]\) with uniform probability density function \(p(x) = 1/M\), the scaling transformation \(y = x / M\) maps the sample space onto \([0, 1]\), and the corresponding probability density function becomes \(M p(M y)\). This transformation does not affect whether sticks drawn from the distribution can form a given polygon. Hence, for bounded probability distributions, without loss of generality, we may assume the upper bound to be \(1\). 

Furthermore, with the exception of Lemma \ref{lemma: PN_mi} (which gives the result of performing the integrals in the Lemma \ref{lemma: integral_PN_p}  probability expression), all of the other lemmas used in our proof (and the algorithm given in the appendix for determining the explicit form of the functions $f_i$ that appear in the Lemma \ref{lemma: l_max} expressions for $l_i^{\text{max}}$ are independent of the probability distribution that the sticks are assumed to be drawn from. Therefore, in principle, our geometric approach can be applied to the pick-up sticks problem where the stick lengths are randomly drawn from any bounded probability distribution on $[0,M]$, provided that a suitable iterative (or other) procedure can be identified for evaluating the integrals resulting from the application of Lemma \ref{lemma: integral_PN_p}. 

However, Lemma \ref{lemma: l_min} is affected by a change in the support for the probability density function from $[0,M]$, denoted \(\operatorname{supp}(p) = \{x \in \mathbb{R}^+ : p(x) > 0\}\), to \(\operatorname{supp}(p) = [aM, M]\), with \(0 < a < 1\): this change results in $l_1^{\text{min}}$ increasing from zero to $aM$. For the case of a uniform probability distribution on $[aM,M]$ with \(p(x) = 1/M(1-a)\) for \(x \in [aM,M]\), if \(aM \leq M/m_1\) (i.e. is less than or equal to the value of $l_1^{\text{max}}$ compatible with the requirement of not forming a $(p+1)$-gon), then we can obtain a modified value of $PN_{p+1}^n$ by adding a factor of \([1/M(1-a)]^n\) for the modified probability distribution, a scale factor of \(M^{n-1}\) for the lengths os sticks $2$ to $n$, and re-evaluating the integral in (\ref{integral:m_1}) with \(l_1^{\text{min}}\) equal to $aM$ and \(l_1^{\text{max}}\) equal to \(M/m_1\) to give
\begin{equation}
       \frac{(1-m_1a)^n}{(1-a)^n}\cdot\frac{1}{\prod_{i=1}^n m_i}, \text{ for }  a\leq 1/m_1 
\label {eq: trunctaed}
\end{equation} 

As we would expect, as \(l_1^{\text{min}} = aM\) approaches \(M/m_1 =l_1^{\text{max}}\) this modified probability of not being able to form a $(p+1)$-gon falls to zero. When $a$ exceeds $1/m_1$, the requirement for not being able to form a $(p+1)$-gon is necessarily violated and $PN_{p+1}^n =0$.  

We also note that equation (\ref{eq: trunctaed}) maintains invariance with respect to the scale factor M. However, if the uniform probability distribution is expressed in the form \(p(x) = 1/(M-b)\) for \(x \in [b,M]\), then equation (\ref{eq: trunctaed}) becomes
\begin{equation*}
       \frac{(M-m_1b)^n}{(M-b)^n}\cdot\frac{1}{\prod_{i=1}^n m_i}, \text{ for }  b\leq M/m_1 
\end{equation*} 
and the scale invariance is "hidden".

\subsection{Unbounded Probability Distributions}
\label{subsec:Unbounded}
We now consider the case where the probability density function, \(p(l)\), is unbounded. In this case, if we introduce a truncation bound \(M\), the probability that all stick lengths are at most \(M\) is \(\mathbb{P}\{l_i\leq M:i=1,2,\dots,n\}\). Then, the probability that they cannot form a (p+1)-gon is obtained as 
\[\begin{aligned}
    \mathbb{P}\{l_i\leq M:i=1,2,\dots,n\}\times n!\int_{l_1^{\text{min}}}^{l_1^{M,\text{max}}}\cdots\int_{l_n^{\text{min}}}^{l_n^{M,\text{max}}}\prod_{i=1}^n p(l_i)\mathrm{d}l_n\dots \mathrm{d}l_1, 
\end{aligned}\]
where the $l_i^{\text{min}}$ values are given by Lemma (\ref{lemma: l_min}) and \(l^{M,\text{max}}_i\) are obtained from Lemma (\ref{lemma: l_max}) by replacing the upper bound \(1\) with \(M\). 

Since \(p(x)\) is a probability density function, by the Lebesgue dominated convergence theorem, taking the limit as \(M\to\infty\) yields the desired probability
\begin{equation}
\begin{aligned}PN^n_{p+1}=n!\int_{l_1^{\text{min}}}^{+\infty}\cdots\int_{l_n^{\text{min}}}^{+\infty}\prod_{i=1}^n p(l_i)\mathrm{d}l_n\dots \mathrm{d}l_1,
\end{aligned}
\label{eq: Unbonded Int}
\end{equation}

Hence, it can be seen that Lemma (\ref{lemma: integral_PN_p}) remains valid for unbounded probability distributions. In this case, there is no constraint in the $l^{\text{max}}$ values, which we simply set equal to \({+\infty}\).  

\textbf{A particular example} is the exponential distribution with parameter \(1\), i.e., a probability density function given by \(p(x) = e^{-x}\) for \(x > 0\). In this case, we obtain 
\begin{equation}
\begin{aligned}PN^n_{p+1}=n!\int_{l_1^{\text{min}}}^{+\infty}\cdots\int_{l_n^{\text{min}}}^{+\infty}\prod_{i=1}^n e^{-l_i}\mathrm{d}l_n\dots \mathrm{d}l_1.
\end{aligned}
\label{eq: PN_p+1^nUnbonded Int}
\end{equation}
This integral was obtained by Mukerjee \cite{Mukerjee2024} when solving the broken stick problem. By successive integration, the explicit expression for arbitrary \(p\) and \(n\) shown below can be derived inductively
\begin{equation} 
\begin{aligned}PN^n_{p+1}=n!\prod_{k=1}^{n-p+2}\frac{1}{\updown{t}{p}_k}\prod_{k=n-p+3}^{n}\frac{1}{\updown{t}{p}_k-\sum_{j=1}^{p+k-n-2}j\updown{t}{p}_{k-j-1}},
\end{aligned}
\label{eq: PN_p+1^nUnbounded}
\end{equation}

where the sequence \(\{t_k^{(p)}\}\) satisfies \(t_1^{(p)} = 1\),  \(t_k^{(p)} = 0\) for \(k=0,-1,\dots,2-p\), and 
\[\updown{t}{p}_k=1+\sum_{j=1}^p\updown{t}{p}_{k-j}, \text{ k} \geq 2.\]
This expression is strikingly similar to the case of a bounded uniform distribution. If we define \(T_k=\updown{t}{p}_k-\updown{t}{p}_{k-1}\), it is straightforward to verify that \(T_k\) is governed by the recurrence relation \(T_k=\sum_{j=1}^pT_{k-j}\), subject to the initial conditions \(T_2=2\), \(T_1=1\), and \(T_{i-p}=0,i=3,\dots,p\). This identifies \(T_k\) as the \(p\) -step Fibonacci sequence.
Consequently, we know that \(\updown{t}{p}_k\) can also be expressed in terms of the \(p\)-step Fibonacci sequence as follows
\[
    t_k^{(p)}=\sum_{j=1}^kF_j^{p},
\]
and, with $t_k^{(p)}$ expressed in this form, the similarity with the case of the bounded uniform distribution is further enhanced. 

Given our earlier finding that the appearance of the Fibonacci numbers results from the form of the expressions for $l_i^{\text{max}}$ when stick lengths are randomly drawn from a bounded probability distribution, it is a little surprising that (\ref{eq: PN_p+1^nUnbounded}) still involves \(p\)-step Fibonacci numbers even though the $l_i^{\text{max}}$ values are infinite in this case.  It is also remarkable that the pick-up sticks probability of not being able to form a $(p+1)$-gon when the sticks are drawn from an unbounded exponential probability distribution is exactly the same as that for the broken stick problem (see equation (\ref{eq: p1_our_result})).

A special case of (\ref{eq: PN_p+1^nUnbounded}) is when \(p=2\), for which it is easy to verify that the Fibonacci sequence satisfies the following identity
\(\sum_{j=1}^kF_j=F_{k+2}-1\), which means \(\updown{t}{2}_k=F_{k+2}-1\). 
Then \[
\begin{aligned}PN^n_3=n!\prod_{k=1}^n\frac{1}{F_{k+2}-1}.
\end{aligned}\]
As for the case of a bounded uniform probability distribution, we note that (\ref{eq: PN_p+1^nUnbounded}) remains valid if a "length" scale factor, $a$, is introduced into the exponential probability distribution to give a normalised probability density function \(p(x) = ae^{-ax}\) for \(x > 0\). In this case, the two factors of $a$ appearing in the normalised probability density function cancel on evaluation of each integral in (\ref{eq: PN_p+1^nUnbonded Int}).

If we change the support for the exponential probability distribution to \(\operatorname{supp}(p) = [b, \infty]\), with \(0 < b < \infty\), then we have  \(p(x) = e^{-x}/e^b\) for \(b \le x \le \infty\) and \(l_1^{min}=b\). The modified probability distribution introduces a factor $exp(-nb)$ into equation (\ref{eq: PN_p+1^nUnbounded}). The change in $l_1^{min}$ only affects the lower limit of the outermost integral in equation (\ref{eq: PN_p+1^nUnbonded Int}), resulting in a second additional factor in equation (\ref{eq: PN_p+1^nUnbounded}) of \(exp[-(\updown{t}{p}_n-\sum_{j=1}^{p-2}j\updown{t}{p}_{n-j-1})]\). The overall result is that equation (\ref{eq: PN_p+1^nUnbounded}) is modified to
\begin{equation*} 
\begin{aligned}
PN^n_{p+1}=n!exp[-(nb+\updown{t}{p}_n-\sum_{j=1}^{p-2}j\updown{t}{p}_{n-j-1})]\prod_{k=1}^{n-p+2}\frac{1}{\updown{t}{p}_k}\prod_{k=n-p+3}^{n}\frac{1}{\updown{t}{p}_k-\sum_{j=1}^{p+k-n-2}j\updown{t}{p}_{k-j-1}}.
\end{aligned}
\end{equation*}

Lastly we note that, as for the case of bounded probability distributions, (\ref{eq: Unbonded Int}) is valid for any unbounded probability distribution provided that a suitable iterative (or other) procedure for evaluating the integrals in (\ref{eq: Unbonded Int}) can be identified.

\section{Relationship to Existing Literature} \label{literature}
We can reproduce the result of Petersen and Tenner \cite{Petersen2020} that the probability of $n$ sticks drawn randomly from $[0,1]$ cannot form an $n$-gon by setting $\left(p+1\right)=n$ in (\ref{eq:main_result}), which gives
\begin{equation}
    {PN}_n^n=\prod_{i=1}^{3}\frac{1}{F_i^{n-1}} \prod_{i=4}^{n}\frac{1}{F_i^{n-1}-\sum_{j=1}^{i-3}{jF_{i-j-1}^{n-1}}}.
    \label{eq:Petersen2020}
\end{equation}
Now $F_1^{n-1}=1$, $F_2^{n-1}=1$ and $F_3^{n-1}=2$. Also, summing the sequences of numbers in (\ref{eq: C1_qq_express}) gives 
\[
{q=F}_{q+1}^p-\sum_{j=1}^{q-2}jF_{q-j}^p,
\]
i.e.,
\begin{equation}
    i-1=F_{i}^{n-1}-\sum_{j=1}^{i-3}jF_{i-j-1}^{n-1}.
\end{equation}
Hence, (\ref{eq:Petersen2020}) reduces to
\[
{PN}_n^n=\frac{1}{\left(n-1\right)!}.
\]
For a stick of unit length broken at $\left(n-1\right)$ random positions to form $n$ random length pieces – the “broken stick problem" – the existing literature actually address three different problems (see, for example, Mukerjee \cite{Mukerjee2024}).

\begin{problem}
    What is the probability, ${PN}_{p+1}^n$, that no $p+1$ pieces out of $n$ can form a $\left(p+1\right)$-gon?
    \label{problem:p1}
\end{problem} 

\begin{problem}
    What is the probability, ${PA}_{p+1}^n$, that every combination of $p+1$ pieces out of $n$ can form a $\left(p+1\right)$-gon?
    \label{problem:p2}
\end{problem}

\begin{problem}
    If $p+1$ pieces are chosen at random out of the $n$ pieces, with all such choices being equally probable, what is the probability, ${PR}_{p+1}^n$, that the chosen pieces can form a $\left(p+1\right)$-gon?
    \label{problem:p3}
\end{problem}

\subsection{Broken Stick Problem \ref{problem:p1}}

For a stick of unit length broken at $\left(n-1\right)$ random positions to form $n$ random length pieces, Mukerjee \cite{Mukerjee2024} proves that no $p+1$ pieces out of $n$ can form a $\left(p+1\right)$-gon is given by
\begin{equation}
    {PN}_{p+1}^n=n!\prod_{r=1}^{n}\beta_r,
    \label{eq:problem1}
\end{equation}
where the $\beta_r$ factors are defined by the following algorithm. Let $e_1,\ldots,e_n$ in $\mathbb{R}^n$ be unit vectors, then define vectors $b_1,\ldots,b_n$ recursively as follows: $b_1=e_1, \ b_r=e_r+b_{r-1}$ for $r=2,\ldots , k$, and $b_r=e_r+\sum_{u=1}^{k}b_{r-u}$ for $r=k+1,\ldots, n$. Finally, ${(\beta_1,\ldots,\beta_{n)}}^T=\sum_{r=1}^{n}b_r$. 

With some minor adaptations, the geometric proof of Theorem \ref{theorem: PN_{p+1}^n} set out in Section \ref{subsec:geom_main_result} can also be applied to provide an alternative proof of (\ref{eq:problem1}) expressed as follows.

\begin{theorem}
    If a stick of unit length is broken at $\left(n-1\right)$ random positions to form $n$ random length pieces, the probability of not being able to form a $(p+1)$-gon from any combination of $\left(p+1\right)$ of these pieces is given by
    \[
    {PN}_{p+1}^n=n!\prod_{i=1}^{n-p+2}\frac{1}{{SF}_i^p} \prod_{i=n-p+3}^{n}\frac{1}{{SF}_i^p-\sum_{j=1}^{i-n+p-2}{j{SF}_{i-j-1}^p}}.
    \]
    where ${SF}_i^p = \sum_{j=1}^{i}F_i^p$, and $\left\{F_i^p\right\}_{i=-\infty}^\infty$ is the sequence of $p$-step Fibonacci numbers.
    \label{theorem: problem1}
\end{theorem}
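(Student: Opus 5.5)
Our plan is to reduce the broken stick problem to the exponential pick-up sticks computation already carried out in Section \ref{subsec:Unbounded}, and then re-run the geometric integration with the exponential density in place of the uniform one.

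\textbf{Step 1: reduction to exponentials.} If $x_1,\dots,x_n$ are i.i.d.\ $\operatorname{Exp}(1)$ and $S=\sum_i x_i$, then $(x_1/S,\dots,x_n/S)$ is uniformly distributed on the simplex. This is exactly the law of the $n$ pieces of a unit stick broken at $n-1$ uniform points. The event that no $p+1$ pieces form a $(p+1)$-gon is invariant under positive scaling, since all the constraints of Lemma \ref{lemma: l_min} are homogeneous linear inequalities. Hence the broken-stick probability equals the pick-up sticks probability for $n$ i.i.d.\ exponential sticks, which is (\ref{eq: PN_p+1^nUnbonded Int}) with $l_i^{\max}=+\infty$ as justified in Section \ref{subsec:Unbounded}.

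\textbf{Step 2: the geometric integration.} We evaluate (\ref{eq: PN_p+1^nUnbonded Int}) from the innermost integral outwards using the lower limits of Lemma \ref{lemma: l_min}. Integrating $e^{-l_n}$ from $\sum_{j=1}^p l_{n-j}$ to $\infty$ gives $e^{-\sum_{j=1}^p l_{n-j}}$. Each further integration over $l_k$ turns an exponent of the form $-c_k l_k - (\text{lower terms})$ into a factor $1/c_k$. We then substitute $l_k = l_k^{\min}$, which adds $c_k$ times the coefficients of $l_k^{\min}$ to the earlier variables.

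\textbf{Step 3: the coefficients.} Starting from coefficient $1$ on every variable, the accumulated coefficient on $l_{n-k+1}$ at the moment it is integrated is $1+$ (the sum of the previous $p$ coefficients). For $l_k$ with $k\ge p$ this gives precisely the recursion $t_k^{(p)}=1+\sum_{j=1}^p t^{(p)}_{k-j}$, with $t_1^{(p)}=1$. The mechanism is the same as Lemma \ref{lemma: mi_values}, with the extra "$+1$" coming from the density $e^{-l}$ attached to each variable. We then use $t_k^{(p)}=\sum_{j=1}^kF_j^p=SF_k^p$, shown in Section \ref{subsec:Unbounded} via $T_k=t_k-t_{k-1}$ being the $p$-step Fibonacci sequence. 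This recovers the stated product, matching (\ref{eq: PN_p+1^nUnbounded}). The factor $n!$ comes from Lemma \ref{lemma: integral_PN_p}. We would also note that the statement's $\sum_j F_i^p$ should read $\sum_{j=1}^iF_j^p$.

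\textbf{Main obstacle.} The delicate step is the first $p-1$ sticks, where $l_i^{\min}=l_{i-1}$ rather than a $p$-fold sum. Here the coefficients pick up the correction terms $-\sum_j j\,SF_{i-j-1}^p$. We would handle this exactly as in the proof of Lemma \ref{lemma: mi_values}. Write the column of $q$ copies of $l_{p-q+1}$ produced by the ordering constraints, and decompose it via (\ref{eq: C1_qq_express}) into shifted sequences, now of the partial sums $SF^p$ instead of $F^p$. Linearity of the recursion $t_k=1+\sum_j t_{k-j}$ in its homogeneous part makes the same telescoping apply. Checking that the inhomogeneous "$+1$" terms line up with the decomposition is the part we expect to require the most care; we would verify it first for $p=3$ and small $n$.
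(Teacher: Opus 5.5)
Your proposal is correct, and it takes a genuinely different route from the paper. The paper proves this theorem by transplanting its uniform-volume argument to the broken stick. Lemma~\ref{lemma:p1_integral} writes $PN_{p+1}^n$ as $n!(n-1)!$ times the volume of an $(n-1)$-dimensional region. The upper limits $l_i^{\max}=(1-g_i)/s_i$ come from imposing $l_n=1-\sum_{j<n}l_j$ instead of $l_n\le 1$. The $s_i$ are read off as sums of the Fibonacci columns from the proof of Lemma~\ref{lemma: mi_values}, and the telescoping Lemma~\ref{lemma: p1_si_rec} then yields $n!\prod_{i=1}^{n-1}1/s_i$.

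You instead use the Dirichlet representation and the homogeneity of the constraints to pass to i.i.d.\ $\operatorname{Exp}(1)$ sticks, and you integrate with infinite upper limits. The partial sums $SF^p$ then come from the factor $e^{-l}$ carried by every variable, not from the final-piece constraint. In both proofs the constant attached to $l_i$ is the total weight of $y_i=l_i-l_i^{\min}$ in $l_1+\cdots+l_n$.

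Your route buys several things:
\begin{itemize}
\item a short rigorous reduction in place of the counting argument behind $n!(n-1)!$;
\item no need for the $l^{\max}$ functions or Lemma~\ref{lemma: p1_si_rec};
\item an explanation of the coincidence the paper calls remarkable in Section~\ref{subsec:Unbounded};
\item an actual derivation of (\ref{eq: PN_p+1^nUnbounded}), which the paper only asserts.
\end{itemize}
The paper's route stays within the uniform-volume picture and shows the pick-up/broken-stick correspondence $F^p\to SF^p$ line by line.

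The step you flag as the main obstacle closes in a few lines, because the two ``$+1$'' terms cancel. Let $c_m$ be the coefficient of $l_m$ at the moment it is integrated, and let $u_m=t^{(p)}_{n-m+1}$ (with $u_k=0$ for $k>n$), so that $u_m=1+\sum_{k=m+1}^{m+p}u_k$.
\begin{itemize}
\item For $m\ge p-1$ you have $c_m=u_m$. For $m=p-1$ this holds because $l_{p-1}$ occurs exactly in $l_p^{\min},\dots,l_{2p-1}^{\min}$.
\item For $m\le p-2$, $l_m$ occurs only in $l_{m+1}^{\min}$ and in $l_k^{\min}$ with $p+1\le k\le m+p$. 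Hence $c_m=1+c_{m+1}+\sum_{k=p+1}^{m+p}u_k$.
\item Subtracting the recursion for $u_m$ gives $c_m=c_{m+1}+u_m-u_{m+1}-\sum_{k=m+2}^{p}u_k$, with the $1$'s gone.
\item Downward induction from $c_{p-1}=u_{p-1}$ then gives $c_m=u_m-\sum_{j=1}^{p-1-m}j\,u_{m+j+1}$. Under $i=n-m+1$ this is exactly the stated factor.
\end{itemize}
Also check $t_k=SF_k^p$ directly rather than citing Section~\ref{subsec:Unbounded}, whose initial value $T_2=2$ is a slip (it should be $T_2=1$). The direct check is short: the differences $t_k-t_{k-1}$ obey the homogeneous recursion for $k\ge 2$, equal $1$ at $k=1$, and vanish for $k\le 0$.
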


Our alternative proof uses the equivalent sequence of lemmas to those used for the proof of Theorem \ref{theorem: PN_{p+1}^n} in Section \ref{subsec:geom_main_result} with appropriate modifications for the broken stick model. In the main, these differences arise because of the constraint on the length of the final stick piece once they have been arranged in order of increasing length, i.e.
\begin{equation}
    l_n=1-\sum_{j=1}^{n-1}{l_j}.
    \label{eq:p1_ln}
\end{equation}

\begin{lemma}
    If a stick of unit length is broken at $\left(n-1\right)$ random positions to form $n$ random length pieces, the probability of not being able to form a $(p+1)$-gon from any combination of $p+1$ of these pieces can be expressed as
    \begin{equation}
        {PN}_{p+1}^n=n!\left(n-1\right)!\int_{l_1^{\text{min}}}^{l_1^{\text{max}}}\ \int_{l_2^{\text{min}}}^{l_2^{\text{max}}} \ldots \int_{l_{n-1}^{\text{min}}}^{l_{n-1}^{\text{max}}}{dl_{n-1} \ldots dl_2 dl_1},
        \label{eq: p1_integral}
    \end{equation}
    where the  limits of integration represent the minimum and maximum lengths of the stick pieces in order of increasing length that are compatible with the requirement of not being able to for a $(p+1)$-gon.
    \label{lemma:p1_integral}
\end{lemma}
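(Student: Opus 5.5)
We need the joint law of the sorted pieces of the broken stick. Breaking a unit stick at $n-1$ independent uniform points gives $n$ pieces whose (unordered, labelled by position) lengths $(x_1,\dots,x_{n-1})$, with $x_n=1-\sum_{j<n}x_j$, are uniformly distributed on the simplex $\{x_j\ge 0,\ \sum_{j=1}^{n-1}x_j\le 1\}$. The key fact, which I would prove first, is that this density is the constant $(n-1)!$. The ordered break points $U_{(1)}<\dots<U_{(n-1)}$ have joint density $(n-1)!$ on $\{0<u_1<\dots<u_{n-1}<1\}$. The map $x_j=u_j-u_{j-1}$ (with $u_0=0$) is linear with unit Jacobian. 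Its image is exactly the simplex, so the spacings vector has density $(n-1)!$ there. By symmetry of the spacings under permutation (Dirichlet$(1,\dots,1)$ exchangeability), the full vector $(x_1,\dots,x_n)$ is exchangeable.

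Next I mimic the proof of Lemma~\ref{lemma: integral_PN_p}. With probability one the pieces are distinct. The event in question is invariant under relabelling, so we may sum over the $n!$ orderings of the pieces. Each ordering contributes the same amount by exchangeability, so the probability is $n!$ times the probability of the region where $l_1\le l_2\le\dots\le l_n$ and the no-$(p+1)$-gon constraint holds.

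Here $l_n$ is not a free variable: it is determined by (\ref{eq:p1_ln}). Exchangeability lets us choose the labelling so that the dependent coordinate is the largest piece. The region is therefore described by the free variables $l_1,\dots,l_{n-1}$ subject to:
\begin{itemize}
\item the ordering $l_{i-1}\le l_i$;
\item the consecutive constraints $\sum_{j=1}^p l_{i-j}\le l_i$ for $i\le n-1$, which are the lower limits $l_i^{\text{min}}$ of Lemma~\ref{lemma: l_min};
\item the condition that $l_n=1-\sum_{j<n}l_j$ satisfies $l_n\ge\sum_{j=1}^p l_{n-j}$.
\end{itemize}

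The last condition is the analogue of the bound $l_n\le 1$ in Lemma~\ref{lemma: l_max}. Propagating the minimal chain $l_{i+1}^{\text{min}},\dots,l_{n-1}^{\text{min}}$ forward, exactly as in that lemma, it becomes an upper limit $l_i^{\text{max}}$ on each $l_i$ that is linear in $l_1,\dots,l_{i-1}$. Integrating the constant density $(n-1)!$ over this region, in the iterated form with these limits, gives (\ref{eq: p1_integral}).

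The main obstacle is justifying that the region is exactly the iterated box $l_i^{\text{min}}\le l_i\le l_i^{\text{max}}(l_1,\dots,l_{i-1})$. That is, for fixed $l_1,\dots,l_{i-1}$, the set of admissible $l_i$ must be an interval whose upper endpoint is attained by setting all later pieces to their minima. I would argue monotonicity: every constraint is linear with nonnegative coefficients on the later variables once $l_n$ is eliminated. Increasing any $l_k$ with $k>i$ only increases the required value of $l_n$ and decreases its available value. Hence a choice of $l_i$ is feasible iff it is feasible with all later variables at their minima. This yields the upper limit as a linear function, and the inequality becomes vacuous when $l_i^{\text{max}}<l_i^{\text{min}}$ (empty interval, zero contribution).

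I would also check that the ordering constraint $l_{n-1}\le l_n$ is implied by $l_n\ge\sum_{j=1}^p l_{n-j}$, so it imposes nothing extra.
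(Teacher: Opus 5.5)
Your proposal is correct and follows essentially the paper's route. The factor $(n-1)!$ is the density of the ordered breaks, equivalently of the spacings on the simplex. The factor $n!$ comes from the $n!$ equally likely orderings of the pieces. Only $l_1,\dots,l_{n-1}$ are free, because $l_n$ is fixed by (\ref{eq:p1_ln}).

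The paper merely asserts these counts and takes the iterated limits $l_i^{\text{min}},l_i^{\text{max}}$ for granted. You justify them: the unit-Jacobian spacing map, exchangeability, and the monotonicity argument that feasibility of $l_i$ can be tested on the minimal continuation. That is a genuine gain in rigour.

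One small correction to your parenthetical about $l_i^{\text{max}}<l_i^{\text{min}}$ giving an ``empty interval, zero contribution'': with oriented integrals that case would contribute negatively, not zero. It simply never occurs inside the iterated domain, since any admissible $(l_1,\dots,l_{i-1})$ makes $l_i=l_i^{\text{min}}$ admissible.
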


\begin{lemma}
    The minimum lengths of the broken stick pieces compatible with the requirement of not being able to for a (p+1)-gon are
    \begin{equation}
        l_i^{\text{min}} =
        \begin{dcases}
        0 & \text{for } i=1, \\
        l_{i-1} & \text{for } 1 < i < p, \\
        \sum_{j=1}^pl_{i-j} & \text{for } i \geq p.
        \end{dcases}
    \end{equation}
    \label{lemma: p1_li_min}
\end{lemma}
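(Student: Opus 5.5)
The plan mirrors the proof of Lemma \ref{lemma: l_min}, with the one difference that the broken-stick constraint (\ref{eq:p1_ln}) removes $l_n$ from the integration variables.

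First I will sort the $n$ pieces so that $l_1\le l_2\le\dots\le l_n$. I then show that ``no $p+1$ pieces form a $(p+1)$-gon'' is equivalent to the consecutive-window inequalities
\[
l_{i-p}+\dots+l_{i-1}\le l_i .
\]
For necessity, note that if some $p+1$ pieces with largest element $l_i$ could not form a polygon while the window ending at $i$ could, we would have a contradiction. Indeed, the $p$ pieces immediately below $l_i$ have the largest possible sum among any $p$ pieces shorter than $l_i$. So the consecutive window is the hardest test, and failing it for every $i$ is exactly the requirement. Sufficiency is immediate, since every other $(p+1)$-subset with top element $l_i$ has a smaller complementary sum.

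Next I read off the lower limits of integration, adopting the convention $l_0=0$ (and $l_k=0$ for $k\le 0$) that the paper already used in the proof of Theorem \ref{theorem: NP3}.
\begin{itemize}
\item For $i=1$ nothing lies below, so $l_1^{\text{min}}=0$.
\item For $1<i<p$ no full window ends at $i$, so only the ordering applies, giving $l_i^{\text{min}}=l_{i-1}$.
\item For $i\ge p+1$ the window inequality gives $l_i^{\text{min}}=\sum_{j=1}^{p}l_{i-j}$. This dominates $l_{i-1}$, so the ordering constraint is automatically satisfied.
\end{itemize}

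The main obstacle is the boundary index $i=p$, where the stated formula reads $\sum_{j=1}^{p}l_{p-j}=l_1+\dots+l_{p-1}+l_0$. I will try to justify it through the structure of the broken-stick integral (\ref{eq: p1_integral}). That integral runs only over $l_1,\dots,l_{n-1}$, with $l_n$ eliminated via (\ref{eq:p1_ln}), so I expect the paper's indexing to be shifted by one relative to Lemma \ref{lemma: l_min}.

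First I would check whether the shift is consistent. If the constraint at the top index is re-expressed through $l_n=1-\sum_{j<n}l_j$, the window inequalities for the remaining free variables should move down by one. The inequality that was attached to $i=p+1$ in the uniform model would then govern the variable labelled $p$ here, with the vanishing term $l_0=0$ absorbing the shift.

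For $p=2$ this is clearly consistent: the formula at $i=2$ reduces to $l_1+l_0=l_1=l_{1}^{\text{min}}$-ordering. For general $p$, I would verify the reindexing on the explicit window $l_1+\dots+l_p\le l_{p+1}$ before committing. If the shift does not hold as I expect, I would fall back on stating the $i=p$ case as the identity produced by the convention $l_0=0$ applied in that reindexed system. I would then check it against Mukerjee's $\beta_r$ recursion, where $b_r$ switches from $e_r+b_{r-1}$ to the $p$-fold sum at the same threshold.
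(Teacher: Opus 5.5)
There is a genuine gap at $i=p$, because the lemma as printed is false there. Your cases $i=1$, $1<i<p$ and $i\ge p+1$ are correct and match the paper, whose proof simply defers to Lemma~\ref{lemma: l_min}. The ``shift by one'' you hope will cover $i=p$ does not exist. Eliminating $l_n$ via (\ref{eq:p1_ln}) only converts the top window inequality $l_{n-p}+\dots+l_{n-1}\le l_n$ into an upper bound on the free variables; this is where the $l_i^{\text{max}}$ and the $s_i$ come from. It does not relabel $l_1,\dots,l_{n-1}$, which are still the order statistics. No $(p+1)$-subset has $l_p$ as its largest member, so the only lower bound on $l_p$ is $l_{p-1}$. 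Your own observation that no full window ends at $i<p$ applies verbatim to $i=p$.

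For $p\ge 3$ the printed clause fails outright. Take $p=3$, $n=4$ and pieces $0.09,0.10,0.15,0.66$. No quadrilateral can be formed, since $0.34\le 0.66$, yet $l_3=0.15<l_1+l_2=0.19$. All inequalities here are strict, so the stronger lower bound would discard a set of positive probability. Your $p=2$ check carries no weight: with $l_0=0$ the sum formula at $i=2$ collapses to the ordering bound $l_1$.

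So no argument can establish the lemma as printed. The ranges ``$1<i<p$'' and ``$i\ge p$'' are a misprint for ``$1<i<p+1$'' and ``$i\ge p+1$''. That corrected version is exactly what the paper's one-line proof (identical to that of Lemma~\ref{lemma: l_min}) delivers. Everything downstream is consistent only with this reading:
\begin{itemize}
\item The ranges $1\le i\le p-2$ and $p-1\le i\le n-1$ in Lemma~\ref{lemma:p1_si_values} are those of the $m_i$. A threshold at $p$ would make $s_{p-2}$ a pure ${SF}$ term.
\item In Mukerjee's recursion as quoted, the $k$-fold sum (with $k=p$) starts only at $r=k+1=p+1$. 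The cross-check you propose would therefore refute the shift rather than support it.
\item For $p+1=n\ge 4$, the spurious constraint $l_{n-1}\ge l_1+\dots+l_{n-2}$ would push $PN_n^n$ strictly below the classical value $n/2^{n-1}$.
\end{itemize}
You should prove the corrected lemma, which your first three bullets already do, and flag the $i=p$ clause rather than try to justify it.

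A minor point: your ``necessity'' sentence is false as written. A non-consecutive subset topped by $l_i$ can fail to form a polygon while the window succeeds. Necessity is simply that the window is itself a $(p+1)$-subset, and your ``hardest test'' remark is the sufficiency argument.
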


\begin{lemma}
    The maximum lengths of the broken stick pieces compatible with the requirement of not being able to form a $(p+1)$-gon are
    \begin{equation}
        l_i^{\text{max}}=
        \begin{dcases}
        \frac{1}{s_1}    &  \text{for } i=1, \\
        \frac{1}{s_i}-\frac{g_i\left(l_1,\ldots, l_{i-1}\right)}{s_i} & \text{for } 1<i\leq n-1, \\
        \frac{1}{s_n}  & \text{for } i=n,
        \end{dcases}
        \label{eq:p1_li_max}
    \end{equation}
    where the $g_i$’s are linear functions of the $l_i$’s with integer coefficients (which in some cases are equal to zero) and the $s_i$'s are positive (non-zero) integer constants with $s_n=1$.
     \label{lemma: p1_li_max}
   \end{lemma}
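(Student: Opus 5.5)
The plan is to mirror the proof of Lemma \ref{lemma: l_max}, with the broken-stick constraint (\ref{eq:p1_ln}) playing the role that the bound $l_n\le 1$ played in the pick-up sticks model. The key observation is that the $n$-th piece is not free: it is $l_n = 1-\sum_{j=1}^{n-1} l_j$. The largest admissible value of $l_i$, given $l_1,\ldots,l_{i-1}$, is therefore attained by choosing the pieces after the $i$-th as short as possible, because that leaves the most room for $l_i$. So the constraint to use is that the pieces beyond the $i$-th consume length, rather than that $l_n$ must stay below a fixed ceiling.

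Concretely, I would fix $l_1,\ldots,l_i$ and set each $l_k$ with $i<k\le n$ equal to $l_k^{\text{min}}$ from Lemma \ref{lemma: p1_li_min}. Iterating the recursion $l_k^{\text{min}}=\sum_{j=1}^p l_{k-j}$ (or $l_{k-1}$ for small $k$), and substituting earlier minima into later ones, writes every $l_k^{\text{min}}$ as an integer linear combination of $l_1,\ldots,l_i$ with nonnegative coefficients. The coefficient of $l_i$ in each such combination is at least $1$.

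The feasibility condition is the sum identity $1=\sum_{k=1}^{n} l_k$, combined with $l_n\ge l_n^{\text{min}}$. Since the total length is fixed, making every later piece minimal gives
\[
1 \;\ge\; \sum_{k=1}^{i} l_k + \sum_{k=i+1}^{n} l_k^{\text{min}}(l_1,\ldots,l_i)
\;=\; g_i(l_1,\ldots,l_{i-1}) + s_i\, l_i .
\]
Here $s_i$ is $1$ plus the sum of the coefficients of $l_i$ in $l_{i+1}^{\text{min}},\ldots,l_n^{\text{min}}$, which is a positive integer, and $g_i$ collects the remaining integer-coefficient terms. Solving for $l_i$ gives $l_i^{\text{max}}=(1-g_i)/s_i$. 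For $i=1$ there are no earlier pieces, so $g_1=0$ and $l_1^{\text{max}}=1/s_1$.

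The case $i=n$ needs a separate remark, because $l_n$ is determined by the other pieces rather than integrated over. To match the stated form I would take $s_n=1$ and $g_n=\sum_{j<n}l_j$, so that $l_n = 1-g_n$. The notational shorthand $1/s_n$ should then be read in the same sense as Lemma \ref{lemma: l_max}.

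I expect the main obstacle to be justifying that the bound is attained: every configuration with $l_i=l_i^{\text{max}}$ and all later pieces minimal must actually be admissible. This requires each constraint $l_k\ge l_k^{\text{min}}$ to hold with equality simultaneously and the ordering $l_{k-1}\le l_k$ to be preserved. I would verify this by induction on $k$, using that the minima are monotone in the earlier lengths. The remaining point, positivity and integrality of $s_i$, is routine, since all coefficients arising from the recursion are nonnegative integers.
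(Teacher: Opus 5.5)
Your proposal is correct and follows the paper's route. The paper simply reruns the proof of Lemma \ref{lemma: l_max}, driving each later piece to its minimum and replacing the final bound $l_n\le 1$ by the broken-stick identity $l_n=1-\sum_{j=1}^{n-1}l_j$; this yields exactly your inequality $1\ge\sum_{k\le i}l_k+\sum_{k>i}l_k^{\text{min}}$, and your description of $s_i$ as one plus the accumulated coefficients of $l_i$ is what Lemma \ref{lemma:p1_si_values} later evaluates as sums of $p$-step Fibonacci numbers, while your monotonicity and attainment checks fill in details the paper leaves implicit and your caveat about $i=n$ is fair, since $l_n$ is not integrated over in Lemma \ref{lemma:p1_integral} and $1/s_n=1$ is there only the trivial bound.
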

\begin{lemma}
    The integer constants $s_i$ take the following values
    \begin{equation}
        s_i=
        \begin{dcases}
        {SF}_{n-i+1}^p-\sum_{k=1}^{p-i-1}k{SF}_{n-i-k}^p    &  \text{for } 1 \leq i \leq p-2, \\
        {SF}_{n-i+1}^p   & \text{for } p-1 \leq i \leq n-1.
        \end{dcases}
        \label{eq:integer_consts}
    \end{equation}
    \label{lemma:p1_si_values}
\end{lemma}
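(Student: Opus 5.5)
The plan is to mirror the proof of Lemma \ref{lemma: mi_values}, changing only what produces the upper bound. In the pick-up sticks model the binding constraint was $l_n\le 1$. Here it is equation (\ref{eq:p1_ln}): the last piece is $l_n=1-\sum_{j=1}^{n-1}l_j$, and it must still satisfy $l_n\ge l_n^{\text{min}}$.

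First I would fix $l_1,\dots,l_i$ and set every later piece $l_{i+1},\dots,l_n$ to its minimum from Lemma \ref{lemma: p1_li_min}. This chain of minima is exactly the configuration attaining $l_i^{\text{max}}$. In it the condition $l_n=l_n^{\text{min}}$ combined with (\ref{eq:p1_ln}) reads
\[
1=\sum_{k=1}^{n} l_k ,
\]
where each $l_k$ with $k>i$ is the linear function of $l_1,\dots,l_i$ produced by the recursion $l_k^{\text{min}}=\sum_{j=1}^{p}l_{k-j}$. Write $c_{k,i}$ for the coefficient of $l_i$ in $l_k$, with $c_{i,i}=1$ and $c_{k,i}=0$ for $k<i$. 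Solving for $l_i$ gives the form (\ref{eq:p1_li_max}), with
\[
s_i=\sum_{k=i}^{n} c_{k,i}
\]
and $g_i$ collecting the contributions of $l_1,\dots,l_{i-1}$. So $s_i$ is the \emph{partial sum} of the same coefficient column that gave $m_i$ in the uniform case. This is where $SF^p$ replaces $F^p$.

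Second, I would reuse the coefficient computation already done in the proof of Lemma \ref{lemma: mi_values}. In the uniform case $m_i$ is the single coefficient $c_{n,i}$, and that proof shows $c_{k,i}=F^p_{k-i+1}$ once the relevant column consists of ascending $p$-step Fibonacci sequences. For $i\ge p-1$ this gives $s_i=\sum_{k=i}^{n}F^p_{k-i+1}=SF^p_{n-i+1}$. For $1\le i\le p-2$, the $q=p-i+1$ repeated copies of $l_i$ above the first full-recursion row were decomposed via (\ref{eq: C1_qq_express}) into shifted Fibonacci sequences:
\[
c_{k,i}=F^p_{k-i+1}-\sum_{j}jF^p_{k-i-j},
\]
with the convention $F^p_m=0$ for $m\le 0$. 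Summing over $k$ from $i$ to $n$ and exchanging the order of summation turns each shifted sequence into the corresponding partial sum. This yields $s_i=SF^p_{n-i+1}-\sum_{k=1}^{p-i-1}kSF^p_{n-i-k}$, which is (\ref{eq:integer_consts}). Positivity of $s_i$ then follows because $s_i\ge m_i>0$ termwise.

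The main obstacle I expect is bookkeeping at the boundaries. Lemma \ref{lemma: p1_li_min} switches to the full $p$-term recursion at $i\ge p$, whereas Lemma \ref{lemma: l_min} switches at $i\ge p+1$, so I must check that the index shift in the decomposition (\ref{eq: C1_qq_express}) still lands on exactly the ranges $1\le i\le p-2$ and $p-1\le i\le n-1$. I must also check that the early terms where $c_{k,i}=1$ (the rows $l_{i+1}\ge l_i$) are correctly absorbed into $SF^p$ through $F^p_1=F^p_2=1$ and the zero convention for negative indices. I would first verify the formula directly for $p=2$ and $p=3$ with small $n$, by expanding $\sum_k l_k$ explicitly. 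I would then carry out the summation interchange in general.
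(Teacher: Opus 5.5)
Your proposal is correct and is essentially the paper's argument: the paper also obtains $s_i$ by summing the whole coefficient column of $l_i$ down to row $n$, which it calls the ``sum of the sums'' of the ascending $F^p$ sequences used for $m_i$; your interchange $\sum_{k=i}^{n}F^p_{k-i-j}=SF^p_{n-i-j}$ just makes that step explicit, and it is legitimate because every index stays $\ge 2-p$, where $F^p$ vanishes. On your boundary concern, take the minima to be exactly those of Lemma \ref{lemma: l_min}, as the proof of Lemma \ref{lemma: p1_li_min} itself states: the switch at $i\ge p$ printed in Lemma \ref{lemma: p1_li_min} is a misprint, and read literally it would give $s_1=SF^3_n$ instead of $SF^3_n-SF^3_{n-2}$ for $p=3$.
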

\begin{lemma}
    For $i\geq2$,
    \begin{equation}
        s_i\left(l_i^{\text{max}}-l_i^{\text{min}}\right)=s_{i-1}\left(l_{i-1}^{\text{max}}-l_i\right).
    \end{equation}
    \label{lemma: p1_si_rec}
\end{lemma}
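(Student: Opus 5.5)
The plan is to mirror the proof of Lemma \ref{lemma: max_min}, with the upper-bound constraint $l_n\le 1$ replaced by the broken-stick normalisation (\ref{eq:p1_ln}). As in the use of Lemma \ref{lemma: max_min} inside the proof of Lemma \ref{lemma: PN_mi}, I read the right-hand side $s_{i-1}(l_{i-1}^{\text{max}}-l_i)$ as an affine function of the single free length that remains once the integral over stick $i$ has been done, evaluated at that free length. Because $l_i$ has already been integrated out at that stage, this free length is the current value of the stick immediately preceding the $i$-th one. So the identity to establish is an equality of affine functions of $l_1,\ldots,l_{i-1}$: $s_i(l_i^{\text{max}}-l_i^{\text{min}})$ on the left, and $s_{i-1}$ times the gap between the maximum of the preceding stick and its actual length on the right. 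The right-hand side vanishes exactly when the preceding stick sits at its maximum, and this is what makes the iterated integral (\ref{eq: p1_integral}) telescope.

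\textbf{Step 1: the linear form.} Fix $l_1,\ldots,l_{i-1}$ and set sticks $i+1,\ldots,n-1$ to their minimal values from Lemma \ref{lemma: p1_li_min}. Each of these is then a linear combination of $l_1,\ldots,l_i$ with nonnegative integer coefficients. Substituting into (\ref{eq:p1_ln}), the condition that the last piece is compatible, i.e.\ $l_n\ge l_n^{\text{min}}$, becomes
\[
1 \ \ge\ s_i\, l_i + s_{i-1}\, l_{i-1} + g_i^{*}(l_1,\ldots,l_{i-2}).
\]
Here $s_i$ and $s_{i-1}$ are the constants of Lemma \ref{lemma: p1_li_max}, and $g_i^{*}$ is a fixed integer linear form. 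The key point is that the coefficient of $l_{i-1}$ in this combination is exactly $s_{i-1}$. To see this, note that when stick $i$ itself is at its minimum, the same bookkeeping started from stick $i-1$ produces the coefficient $s_{i-1}$, by the defining property of $s_{i-1}$ in Lemma \ref{lemma: p1_li_max}.

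\textbf{Step 2: two extremal configurations.} I then evaluate this form on two sequences.
\begin{itemize}
\item Sequence 1: stick $i$ at $l_i^{\text{max}}$ and all later sticks minimal, so equality holds. This gives $1=s_il_i^{\text{max}}+s_{i-1}l_{i-1}+g_i^{*}$.
\item Sequence 2: stick $i-1$ at its maximum and sticks $i,\ldots$ minimal, again with equality. This gives $1=s_il_i^{\text{min}}+s_{i-1}l_{i-1}^{\text{max}}+g_i^{*}$.
\end{itemize}
Both configurations share $l_1,\ldots,l_{i-2}$, so the term $g_i^{*}$ is identical in the two equations. Subtracting them yields the claimed identity.

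\textbf{Step 3: the endpoint $i=n$.} This case needs separate treatment, because $l_n$ is not an integration variable in (\ref{eq: p1_integral}). I will check directly from (\ref{eq:p1_ln}) that $l_{n-1}^{\text{max}}-l_{n-1}$ is proportional to the admissible slack, and then match the constants using $s_n=1$ and the value of $s_{n-1}$ from Lemma \ref{lemma:p1_si_values}.

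\textbf{Main obstacle.} I expect the hard part to be the coefficient claim in Step 1 for indices $i\le p$. There, Lemma \ref{lemma: p1_li_min} makes the minimal sticks $l_i^{\text{min}}=l_{i-1}$ repeat the previous length rather than sum $p$ of them. So I must verify that pinning stick $i$ to $l_i^{\text{min}}=l_{i-1}$ merges the coefficient of $l_i$ into that of $l_{i-1}$, and that the result is exactly $s_{i-1}$ and not merely a multiple of it. My first attempt will be to argue purely structurally: any configuration in which sticks $i,\ldots$ are all minimal is by definition the configuration that defines $s_{i-1}$, so no appeal to the explicit formulas (\ref{eq:integer_consts}) is needed. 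If this does not go through, I will fall back on a check using the $S_{j,k}$ decompositions from the proof of Lemma \ref{lemma: mi_values}, with $F$ replaced by $SF$.
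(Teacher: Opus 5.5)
Your route is the paper's route: its proof of this lemma defers to Lemma~\ref{lemma: max_min} with $s_i$ in place of $m_i$, which is the same Sequence~1/Sequence~2 subtraction, and it means $l_{i-1}$ on the right-hand side, as you do. However, the ``key point'' of your Step~1 is false, and so are both equations of Step~2. The paper's (\ref{eq: seq1_mi})--(\ref{eq: seq2_mi}) have the same defect. Write the stage-$i$ condition (stick $i$ free, sticks $i+1,\dots,n-1$ minimal) as $1\ge s_il_i+c\,l_{i-1}+h(l_1,\dots,l_{i-2})$. In every case of Lemma~\ref{lemma: p1_li_min}, $l_i^{\min}$ contains $l_{i-1}$ with coefficient $1$. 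So pinning $l_i=l_i^{\min}$ adds $s_i$ to the coefficient of $l_{i-1}$. The result is the stage-$(i-1)$ form, hence $c+s_i=s_{i-1}$, i.e.\ $c=s_{i-1}-s_i\neq s_{i-1}$. This is exactly the ``merging'' you describe in your last paragraph.

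Concretely, take $p=2$, $n=5$. With $l_4=l_2+l_3$, the requirement $l_5=1-\sum_{j\le 4}l_j\ge l_3+l_4$ reads $l_1+3l_2+4l_3\le 1$, and at stage $2$ it reads $5l_1+7l_2\le 1$. So $s_3=4$ and $s_2=7$, but $l_2$ has coefficient $3$. Your Sequence~1 equation $1=4l_3^{\max}+7l_2+g^*(l_1)$ therefore fails for every choice of $g^*(l_1)$. Sequence~2 double counts stick $i$: since $s_{i-1}$ already contains the $s_i$ transferred by pinning, the correct evaluation is $1=s_{i-1}l_{i-1}^{\max}+(\text{a form in }l_1,\dots,l_{i-2})$, with no separate $s_il_i^{\min}$ term. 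Moreover, in Sequence~2 $l_i^{\min}$ is evaluated at $l_{i-1}^{\max}$, not at the free $l_{i-1}$ of the target identity. The difference of your two equations happens to be the right identity, but neither equation holds, so nothing is proved.

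The missing idea is to apply your structural observation to the whole affine form rather than to one coefficient. Let $1\ge s_il_i+g_i(l_1,\dots,l_{i-1})$ be the stage-$i$ condition, so that $s_il_i^{\max}=1-g_i$ as in Lemma~\ref{lemma: p1_li_max}; note that $g_i$ does involve $l_{i-1}$. Setting $l_i=l_i^{\min}$ and then sticks $i+1,\dots,n-1$ to their minima is precisely the configuration that defines the stage-$(i-1)$ form. Hence, identically in $l_1,\dots,l_{i-1}$,
\[
s_il_i^{\min}+g_i(l_1,\dots,l_{i-1})=s_{i-1}l_{i-1}+g_{i-1}(l_1,\dots,l_{i-2}),
\]
whence $s_i(l_i^{\max}-l_i^{\min})=1-g_{i-1}-s_{i-1}l_{i-1}=s_{i-1}(l_{i-1}^{\max}-l_{i-1})$. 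This needs no explicit values of the $s_i$ and removes your $i\le p$ worry. It covers $2\le i\le n-1$, which is all Lemma~\ref{lemma:p1_PN_si} uses.

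Step~3 should be dropped or restated. With $s_{n-1}={SF}_2^p=2$, your slack computation gives $2(l_{n-1}^{\max}-l_{n-1})=l_n-l_n^{\min}$ with $l_n=1-\sum_{j<n}l_j$ from (\ref{eq:p1_ln}). By contrast, with $l_n^{\max}=1/s_n=1$ as in Lemma~\ref{lemma: p1_li_max}, the two sides of the $i=n$ instance differ by $\sum_{j<n}l_j$.
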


\begin{lemma}
    The probability of not being able to form a $(p+1)$-gon from any $(p+1)$ out of $n$ pieces is given by
    \begin{equation}
        {PN}_{p+1}^n=n!\prod_{i=1}^{n-1}{\frac{1}{s_i}}.
    \end{equation}
    \label{lemma:p1_PN_si}
\end{lemma}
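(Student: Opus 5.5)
The plan is to mirror the proof of Lemma \ref{lemma: PN_mi}, now in the broken-stick setting. The integrand in Lemma \ref{lemma:p1_integral} is identically one over $l_1,\dots,l_{n-1}$, with $l_n$ eliminated through (\ref{eq:p1_ln}).

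I would first record the analogue of the boundary step used at the end of the proof of Lemma \ref{lemma: max_min}. The innermost integral is over $l_{n-1}$, from $l_{n-1}^{\text{min}}$ to $l_{n-1}^{\text{max}}$. By Lemma \ref{lemma: p1_li_max}, $s_{n-1}l_{n-1}^{\text{max}}=1-g_{n-1}(l_1,\dots,l_{n-2})$. Hence the innermost integral equals
\[
\frac{1}{s_{n-1}}\,s_{n-1}\bigl(l_{n-1}^{\text{max}}-l_{n-1}^{\text{min}}\bigr).
\]
By Lemma \ref{lemma: p1_si_rec} with $i=n-1$, this equals $\frac{1}{s_{n-1}}\,s_{n-2}\bigl(l_{n-2}^{\text{max}}-l_{n-2}\bigr)$.

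Next comes the induction. Suppose that after integrating out $l_{n-1},\dots,l_{k+1}$ the result is
\[
\frac{1}{(n-1-k)!\,s_{n-1}\cdots s_{k+1}}\Bigl(s_k\bigl(l_k^{\text{max}}-l_k\bigr)\Bigr)^{n-1-k}.
\]
Integrating over $l_k$ from $l_k^{\text{min}}$ to $l_k^{\text{max}}$ gives
\[
\frac{1}{(n-k)!\,s_{n-1}\cdots s_{k}}\Bigl(s_k\bigl(l_k^{\text{max}}-l_k^{\text{min}}\bigr)\Bigr)^{n-k}.
\]
Here one factor $1/s_k$ comes from the change of variable $u=s_k(l_k^{\text{max}}-l_k)$. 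For $k\ge2$, Lemma \ref{lemma: p1_si_rec} then converts this into the same form with $k$ replaced by $k-1$, which closes the induction.

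At $k=1$ we use $l_1^{\text{min}}=0$ and $l_1^{\text{max}}=1/s_1$, so the remaining factor becomes $(s_1\cdot 1/s_1)^{n-1}=1$. The full integral is therefore $1/\bigl((n-1)!\prod_{i=1}^{n-1}s_i\bigr)$. Multiplying by the prefactor $n!(n-1)!$ from Lemma \ref{lemma:p1_integral} gives ${PN}_{p+1}^n=n!\prod_{i=1}^{n-1}1/s_i$.

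The main obstacle is bookkeeping rather than substance. One must check that each upper limit $l_k^{\text{max}}$ depends only on $l_1,\dots,l_{k-1}$, so the change of variable is legitimate at each stage. One must also check that Lemma \ref{lemma: p1_si_rec} is applied with the current integration variable substituted for $l_{i-1}$ (it should read $s_{i-1}(l_{i-1}^{\text{max}}-l_{i-1})$). Finally, one must check that the innermost step really matches the recursion, i.e.\ that $l_{n-1}^{\text{max}}$ is determined by $l_n=1-\sum_{j<n}l_j\ge l_n^{\text{min}}$ with coefficient $s_{n-1}$. If that boundary identity fails, I would instead verify it directly from (\ref{eq:p1_ln}) and Lemma \ref{lemma: p1_li_min}, as was done for $l_{n-1}^{\text{max}}$ in the proof of Lemma \ref{lemma: max_min}.
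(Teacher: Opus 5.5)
Your proposal is correct and follows the paper's route. The paper proves this lemma only by saying that the argument for Lemma \ref{lemma: PN_mi} carries over with $n-1$ integrals and $s_i$ in place of $m_i$; your explicit induction (the factor $1/s_k$ from the substitution, the recursion at each stage, and $l_1^{\max}=1/s_1$ at the end) is exactly that argument written out. You are also right that the recursion must be read as $s_i(l_i^{\max}-l_i^{\min})=s_{i-1}(l_{i-1}^{\max}-l_{i-1})$, and since the innermost variable here is $l_{n-1}$, the step you worried about is just Lemma \ref{lemma: p1_si_rec} at $i=n-1$, so no separate boundary identity like the $i=n$ case in Lemma \ref{lemma: max_min} is needed.
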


\begin{proof}[Proof of Theorem \ref{theorem: problem1}]
Lemmas \ref{lemma:p1_PN_si} and \ref{lemma:p1_si_values} lead directly to
\[
{PN}_{p+1}^n=\ n!\prod_{i=p-1}^{n}\frac{1}{{SF}_{n-i+1}^p}\ \ \prod_{i=1}^{p-2}\frac{1}{{SF}_{n-i+1}^p-\ \sum_{j=1}^{p-i-1}{j{FS}_{n-i-j}^p}},
\]
where we have chosen to include an extra factor (equal to one) for $i=n$ in the first product for consistency with the equivalent expression for pick-up sticks case. Finally, making the transformation $i\rightarrow n-i+1$ in both products gives
\begin{equation}
    {PN}_{p+1}^n=\ n!\prod_{i=1}^{n-p+2}\frac{1}{{SF}_i^p}\ \ \prod_{i=n-p+3}^{n}\frac{1}{{SF}_i^p-\ \sum_{j=1}^{i-n+p-2}{j{SF}_{i-j-1}^p}}. \qedhere
    \label{eq: p1_our_result}
\end{equation}
\end{proof}

The equivalence of (\ref{eq: p1_our_result}) and (\ref{eq:problem1}) can be seen by comparing the derivation of the $s_{i}$ values in the proof of Lemma \ref{lemma:p1_si_values} below with Mukerjee's recursive algorithm \cite{Mukerjee2024}for the calculation of his $\beta_i$ factors. It is relatively straightforward to see that the two sets of factors are exactly equivalent, i.e. $s_{i}=\beta_i$.

Mukerjee’s paper \cite{Mukerjee2024} includes an appendix that demonstrates his expression for ${PN}_{p+1}^n$ is equivalent to that proved by Verreault \cite{Verreault2022b} which, although it involves the reciprocals of summated sequences of Fibonacci numbers and looks somewhat similar, is also different from (\ref{eq: p1_our_result}).

The form given in (\ref{eq: p1_our_result}) has the advantage of having a pleasing symmetry with the equivalent expression derived in Section \ref{subsec:geom_main_result} for the pick-up sticks case. This symmetry emphasises the close fundamental relationship between the two cases, with the only difference between the two probability expressions being an additional combinatorial factor of $n!$ in (\ref{eq: p1_our_result}), and the replacement of all $F_i^p$ terms by $\sum_{j=1}^{i}F_i^p$ as a result of the different constraints on the length of the longest stick/stick piece for the two cases. 

Also note that setting $\left(p+1\right)=n$ in (\ref{eq: p1_our_result}) gives
\begin{equation}
    {PN}_n^n=\ n!\prod_{i=1}^{3}\frac{1}{{SF}_i^{n-1}}\ \ \prod_{i=4}^{n}\frac{1}{{SF}_i^{n-1}-\ \sum_{j=1}^{i-3}{j{SF}_{i-j-1}^{n-1}}}.
    \label{eq: p1_p=n}
\end{equation}
Because the series of $F_i^{n-1}$ numbers (all starting at $F_1^{n-1}$) in (\ref{eq: p1_p=n}) go no higher than $F_n^{n-1}$, with the exception of ${SF}_1^{n-1}$ which is equal to one, for all other series we have ${SF}_i^{n-1}=2SF_i^{n-1}$ and, therefore, by comparison with (\ref{eq:Petersen2020}) we have
\begin{equation}
    {PN}_n^n=\ \frac{n!}{\left(n-1\right)!2^{n-1}}=\frac{n}{2^{n-1}},
\end{equation}
which is in agreement with the long established expression noted by Verreault \cite{Verreault2022a} and for which an alternative proof was provided by Andrea and Gomez \cite{DAngelis2006}.

\begin{proof}[Proof of Lemma \ref{lemma:p1_integral}]
    To begin, we note that there are $\left(n-1\right)!$ permutations of the order in which $\left(n-1\right)$ breaks can be introduced into the stick at the same $\left(n-1\right)$ positions along its length. In addition, there are $n!$ combinations of positions at which $\left(n-1\right)$ breaks can be introduced into the stick that all give the same combination of lengths for the resulting stick pieces (or, equivalently, there are $n!$ permutations of the resulting orders a given combination of the lengths of the resulting stick pieces can occur in). There are, therefore, a total combination of $n!(n-1)!$ ways in which the stick can be broken at $\left(n-1\right)$ random positions that all give the same combination of lengths for the resulting stick pieces.
    
    We also note that, when we arrange the resulting pieces in some defined order (in particular, in order of increasing length) and consider variations in the lengths of the individual pieces, then we only have $\left(n-1\right)$ degrees of freedom since, once lengths have been assigned to the first $\left(n-1\right)$ pieces, then the length of the $n$th piece is fixed by the constraint given in (\ref{eq:p1_ln}).

    Finally, we note that the random position of the breaks means that the lengths of the resulting pieces have a uniform random distribution.
    
    (\ref{eq: p1_integral}) follows directly from the above observations.
\end{proof}

\begin{proof}[Proof of Lemma \ref{lemma: p1_li_min}]
    This proof is identical to that from Lemma \ref{lemma: l_min} given in Section \ref{subsec:geom_main_result}.
\end{proof}

\begin{proof}[Proof of Lemma \ref{lemma: p1_li_max}]
    The proof is the same as that for Lemma \ref{lemma: l_max} in Section \ref{subsec:geom_main_result} except that in the last step we use (\ref{eq:p1_ln}) to set $l_n=1-\sum_{j=1}^{n-1}l_{j}$ rather than using the constraint $l_n \leq 1$. The resulting expressions for the $l_i^{\text{max}}$ values have exactly the same form, however we have used $g_i$ and $s_i$ rather than $f_i$ and $m_i$ to represent the linear functions and integer constants in (\ref{eq:p1_li_max}) to emphasise that these functions and constants are different from the ones in the pick-up sticks case. 
\end{proof}
\begin{proof}[Proof of Lemma \ref{lemma:p1_si_values}]
    We first consider the case where $p+1 \leq i \leq n-1$. Using the $l_i^{\text{min}}$ values given by Lemma \ref{lemma: p1_li_min} and using the same approach as for the proof of Lemma \ref{lemma: l_max}, we can calculate $l_i^{\text{max}}$ as follows. With the stick pieces arranged in order of increasing length, we have
    \begin{equation*}
    \begin{aligned}
    &l_1 =	l_1,	\\
    &\vdots		\\
    &l_{i-1}=l_{i-1}, \\		
    &l_i=F_1^pl_i , \\	
    &\vdots			\\
    &l_{i+1} \ge F_2^pl_i+g_{i+1}(l_{i-1},l_{i-2},\ldots, l_{i-p+1}),		\\
    &l_{i+2} \ge F_3^pl_i+g_{i+2}(l_{i-1},l_{i-2},\ldots, l_{i-p+1}),		\\
    &\vdots			\\
    &l_n \ge F_{n-i+1}^pl_i+g_n(l_{i-1},l_{i-2},\ldots, l_{i-p+1}).
    \end{aligned}
    \end{equation*}
    
    Applying the constraint $l_n \leq 1$ used in the proof of Lemma \ref{lemma: mi_values} gave  $m_i=F_{n-i+1}^p$ for $p+1\leq i\leq n-1$. However, as previously noted, for the broken sticks case we instead use (\ref{eq:p1_ln}) to set $l_n=1-\sum_{j=1}^{n-1}l_j$, which results in
    \begin{equation}
        s_i=\sum_{j=1}^{n-i+1}F_j^p,
    \label{eq: si1}
    \end{equation}
    for $p+1 \leq i \leq n-1$. 
    
    The determination of the $m_i$ values for $i \leq p+1$ in Section \ref{subsec:geom_main_result} involved the identification of a number of ascending sequences of $F_i^p$ numbers (each starting at $F_1^p$) extending down to the final, $l_n$, row in (\ref{eq: l_mins}). The final step was application of the constraint $l_n \leq 1$, which resulted in the $m_i$ values being identified as the sum of the final $F_i^p$ numbers in each of these sequences. As set out above for the calculation of the $s_i$ values for $p+1 \leq i \leq n-1$, the determination of the broken stick $s_i$ values for $i \leq p+1$ also follows the same process as in Section \ref{subsec:geom_main_result} except that at the last step use (\ref{eq:p1_ln}) to set $l_n=1-\sum_{j=1}^{n-1}l_j$. By comparison with the calculation above, it is clear that this results in the $s_i$ values being identified as the sum of the sums of the sequences of $F_i^p$ numbers identified in the Section \ref{subsec:geom_main_result}. calculation of the equivalent $m_i$ values, i.e.
    \begin{equation}
        s_{p-q+1}=
        \begin{dcases}
        \sum_{j=1}^{n-p+q}F_j^p    &  \text{for }  q=1,\ 2, \\
        \sum_{j=1}^{n-p+q}F_j^p -\sum_{k=1}^{q-2}k \sum_{j=1}^{n-p+q-k-1}F_j^p   & \text{for } 3 \leq q \leq p-1.
        \end{dcases}
        \label{eq: si2}
    \end{equation}
    Finally, making the transformation $p-q+1\rightarrow i$ and using ${SF}_i^p$ to represent $\sum_{j=1}^{i}F_i^p$, by combining (\ref{eq: si1}) and (\ref{eq: si2}) we have
    \begin{equation*}
        s_i=
        \begin{dcases}
        {SF}_{n-i+1}^p-\sum_{k=1}^{p-i-1}k{SF}_{n-i-k}^p    &  \text{for } 1 \leq i \leq p-2, \\
        {SF}_{n-i+1}^p   & \text{for } p-1 \leq i \leq n-1. \qedhere\
        \end{dcases}
    \end{equation*}
\end{proof}
\begin{proof}[Proof of Lemma \ref{lemma: p1_si_rec}]
    The proof is identical to that for Lemma \ref{lemma: max_min} in Section \ref{subsec:geom_main_result} but with $s_i$'s replacing $m_i$'s.
\end{proof}

\begin{proof}[Proof of Lemma \ref{lemma:p1_PN_si}]
    This lemma is essentially the same as Lemma \ref{lemma: PN_mi} in Section \ref{subsec:geom_main_result}; the only differences are that there are now only $\left(n-1\right)$ integrals that contribute to the probability expression and the $s_i$ are different integer constants from the $m_i$ that applied in the case of Lemma \ref{lemma: PN_mi}. In all other respects, the proof is the same as that for Lemma \ref{lemma: PN_mi}.
\end{proof}

\subsection{Pick-up Sticks Equivalent of Broken Stick Problem \ref{problem:p2}}
Note, in this Section 4.2 we will use $\Sigma^i$ as shorthand to denote $\sum_{j=1}^{i}l_j$ (where the sticks are arranged in order of non-decreasing length).

For smaller values of $p+1$, it is relatively straightforward to derive expressions for the probability that every choice of $p+1$ from $n$ random length sticks drawn from $[0,1]$ will form a $(p+1)$-gon by using Lemma 3.2 but noting that there are now several separate regions to the domain of integration that is compatible with the requirement that all choices of $p+1$ sticks will form a $(p+1)$-gon, ${PA}_{p+1}^n.$

We start by noting that the requirement for all choices of $p+1$ sticks forming a $(p+1)$-gon would be met if all $n$ sticks had equal lengths very close to zero. Hence, the only lower bound limits on stick lengths are those that result from the sticks being arranged in order of increasing length, i.e. 
 \begin{equation}
 \begin{aligned}
& l_1^{min}=0,\\
  &l_i^{min}=l_{i-1}   & \text{for } i \geq 2.\ \\
  \end{aligned}
   \label{eq: minimum_lengths}
 \end{equation}
 We then note that any set of stick length ranges compatible with the requirement for any choice of $p+1$ sticks forming a $(p+1)$-gon must satisfy one of two following conditions:

 either
 \begin{equation}
  \Sigma^p < 1 \text{ and }l_i^{\text{max}} \leq \Sigma^p\  \text{ for } p+1\leq i\leq n\ 
  \label{eq: condition_one}
  \end{equation}

  or
  \begin{equation}\Sigma^p\ > 1.\ 
 \label{eq: condition_two}
  \end{equation}
 In the case of the second of these conditions, the only constraint on the maximum lengths of sticks $p+1$ to $n$ is $l_i^{\text{max}}=1$.  

 The lower bound limits on minimum stick lengths given by  (\ref{eq: minimum_lengths}) mean that  (\ref{eq: condition_one}) can only be satisfied if the maximum lengths of sticks 1 to $p$ are all constrained as follows
  \begin{equation}
  l_i^{\text{max}A}=\frac{1 - \Sigma^{i-1}}{p+1-i}\  \text{ for } i\leq p.\
  \label{eq: maxA}
  \end{equation}
  If $l_1$  exceeds the maximum length given by (\ref{eq: maxA}), then the lower bound limits on the minimum lengths of sticks 2 to $p$ given by (\ref{eq: minimum_lengths}) mean that $\Sigma^p$\ will necessarily be $> 1$. If $l_1$ is less than the maximum length given by (\ref{eq: maxA}) but $l_2$ exceeds the maximum length given by (\ref{eq: maxA}), then the lower bound limits on the minimum lengths of sticks 3 to p given by (\ref{eq: minimum_lengths}) mean that $\Sigma^p$\ will necessarily be $> 1$, and so on up to and including the length of stick $p$ exceeding the maximum given by (\ref{eq: maxA}). 

The above considerations give rise to $p+1$ separate regions of the domain of integration compatible with the requirement that all choices of $p+1$ sticks will form a $(p+1)$-gon and, hence, we have 
\begin{equation}
\begin{aligned}
{PA}_{p+1}^n&=n!\int_{0}^{l_1^{\text{max}A}} \ldots\int_{l_{p-2}}^{l_{p-1}^{\text{max}A}}\int_{l_{p-1}}^{l_p^{\text{max}A}}\int_{l_p}^{\Sigma^p} \ldots\int_{l_{n-1}}^{\Sigma^p}{dl}_n \ldots  {dl}_{p+1} {dl}_p  {dl}_{p-1} \ldots {dl}_1 \\
&+n!\int_{l_1^{\text{maxA}}}^{1}\int_{l_1}^{1}  \ldots\int_{l_{n-1}}^{1}{{dl}_n{{\ldots dl}_{2}dl}_1}\\
&+n!\int_{0}^{l_1^{\text{max}A}}\int_{l_2^{\text{max}A}}^{1} \int_{l_2}^{l_1} \ldots\int_{l_{n-1}}^{1} {dl}_n  \ldots\ {dl}_3 {dl}_{2} {dl}_1 \\
&\vdots \\
&+n!\int_{0}^{l_1^{\text{max}A}}  \ldots\int_{l_{p-2}}^{l_{p-1}^{\text{max}A}}\int_{l_{p-1}}^{1} \ldots\int_{l_{n-1}}^{1}{dl}_n \ldots   {dl}_p  {dl}_{p-1} \ldots {dl}_1.
\end{aligned}
\label{eq: Full_PA_Integral}
    \end{equation}  
For the simplest case of $p+1=3$,  (\ref{eq: Full_PA_Integral}) reduces to 
\begin{equation}
\begin{aligned}
{PA}_{3}^n&=n!\int_{0}^{\frac{1}{2}} \int_{l_1}^{1-{l_1}}\int_{l_2}^{l_1+l_2} \ \ldots\int_{l_{n-1}}^{l_1+l_2}{dl}_n\ \ldots\  {dl}_3  {dl}_2\  {dl}_1\ \\
&+n!\int_{\frac{1}{2}}^{1}\int_{l_1}^{1} \ \ldots\int_{l_{n-1}}^{1}{{dl}_n{{\ldots dl}_{2}dl}_1}\\
&+n!\int_{0}^{\frac{1}{2}}\int_{1-l_1}^{1} \int_{l_2}^{1} \ \ldots\int_{l_{n-1}}^{1}\ {dl}_n\  \ldots\ {dl}_3\ {dl}_{2}\ {dl}_1.
\end{aligned}
\label{eq: Integral_PAA3N}
    \end{equation}  
It is relatively straightforward to evaluate all three integrals in (\ref{eq: Integral_PAA3N}) giving
\begin{equation}
    {PA}_3^n=\frac{1}{2^{n-2}}.
    \label{eq: PAA3N}
\end{equation}
Applying (\ref{eq: Full_PA_Integral}) in the same way for the case of $p+1=4$ results in

\begin{equation}
    {PA}_4^n=2 \left\{ {{\left(\frac{2}{3}\right)}^{n-3}-{\left(\frac{1}{2}\right)}^{n-2}}\right\}.
    \label{eq: PAA4N}
\end{equation}
The probability expressions given by (\ref{eq: PAA3N}) and (\ref{eq: PAA4N}) are much simpler than the relatively complex general expressions proved by Verreault \cite{Verreault2022b} and Mukerjee \cite{Mukerjee2024} (which are different but equivalent) for the probability that every choice of  $p+1$ pieces of a stick broken into $n$ parts will form a $(p+1)$-gon. Consequently, there is no obvious relationship between the probability expressions for the broken stick and pick-up sticks models. Moreover, (\ref{eq: Full_PA_Integral}) for the pick-up sticks case involves $p+1$ $n$-dimensional integral contributions to $PA_{p+1}^n$, and the evaluation of each of these $n$-dimensional integrals is progressively more complex with increasing values of $p+1$. Therefore, it is unclear whether (\ref{eq: Full_PA_Integral}) provides a basis for deriving a general expression for $PA_{p+1}^n$ in the case of the pick-up sticks model.

\subsection{Pick-up Sticks Equivalent of Broken Stick Problem \ref{problem:p3}}
Mukerjee \cite{Mukerjee2024} proves a theorem (Theorem 3) that the probability of $p+1$ pieces chosen at random out of the $n$ pieces of a broken stick, with all such choices being equally probable, do not form a $(p+1)$-gon is equal to $PN_{p+1}^{p+1}$, i.e. the probability of not being able to (p+1)-gon when a stick is broken into $p+1$ pieces. This proof is based on a demonstration that the distribution of piece lengths resulting a random selection of $p+1$ pieces from the $n$ random length pieces of broken stick results in the same probability of not forming a $(p+1)$-gon as the random piece length distribution from a stick broken into $p+1$ pieces. As such, it is not dependent on the fact that the random length pieces come from broken sticks, and is equally applicable to the pick-up sticks problem where the stick lengths are randomly selected from $[0,1]$. 

Applying Mukerjee’s Theorem 3 \cite{Mukerjee2024} to the pick-up sticks problem, we find that if $p+1$  sticks are chosen at random out of the $n$ sticks drawn from $[0,1]$, with all such choices equally probable, then the probability that the chosen pieces can form a $(p+1)$-gon is given by
\begin{equation}
       PR_{p+1}^n = (1 - PN_{p+1}^{p+1})  = 1 - \frac{1}{p!}.
        \label{eq:Mukerjee2024}
\end{equation}

\section{Closing Remarks} \label{conclusion}
As encouraged by Sudbury et al. \cite{Sudbury2025}, we have extended their method by using matrix algebra to derive a generalised expression for the probability of not being able to form a $(p+1)$-gon from any $p+1$ of $n$ sticks with independent random lengths selected from a uniform distribution on $[0, 1]$.

We have also developed an alternative proof for this probability expression based on the minimum and maximum lengths of each stick compatible with the requirement of not being able to from a $(p+1)$-gon. This alternative proof reveals that it is the geometrical constraints on the minimum and maximum stick lengths that are the underlying reason for the appearance of the Fibonacci numbers in the probability expression. 
The alternative geometric proof has been developed in such a way that it can, in principle, be applied to sticks drawn randomly from any (bounded or unbounded) probability distribution, provided that a suitable iterative (or other) procedure can be identified for evaluating the integrals resulting from the application of Lemma \ref{lemma: integral_PN_p}. 

In this paper, we have applied the approach to the particular cases of sticks randomly drawn from a bounded uniform probability distribution and an unbounded exponential distribution. Remarkably, the case of sticks drawn randomly from an unbounded exponential distribution results in exactly the same expression for the probability of not being a form a $(p+1)$-gon as for the broken stick problem. Identifying other stick length probability distributions where our geometric approach can be applied is suggested as an area for further research. In the Appendix, we provide an algorithm that can be used to generate the explicit form of the linear functions in the expressions for \(l_i^{\text{max}}\) that may be useful in any such future research.

We have also used our geometric approach to prove an alternative expression (to those previously derived by Verreault \cite{Verreault2022b} and Mukerjee \cite{Mukerjee2024}) for the probability of not being able to form a  $(p+1)$-gon from any $p+1$ of the $n$ pieces formed when a stick of unit length is broken at $(n-1)$ positions. This alternative expression for the 
broken sticks probability has the advantage of having a pleasing symmetry with the expression we have derived for the pick-up sticks probability, thus emphasising the close fundamental relationship between the two cases. 
Finally, for the pick-up sticks case, we have used our geometric approach to derive the probability of all combinations of $p+1$ of $n$ sticks forming a $(p+1)$-gon for the two lowest values of $p+1$, 3 for triangles and 4 for quadrilaterals. Finding a generalised expression for this probability for any value of $p+1$ is an outstanding problem that also merits further research.
\section*{Disclosure statement}
No conflict of interests was reported by the authors.

\bibliographystyle{unsrt}
\bibliography{references}   
\section{Appendix 1: Algorithm for $l^{\text{max}}$ Functions} \label{Appendix}
In this appendix, we derive an algorithm that can be used to generate the explicit form of the linear functions in the expressions for \(l_i^{\text{max}}\) in the case of sticks randomly drawn from a probability distribution with an upper bound of \(1\). For a fixed \(p\geq 2\). Firstly, for \(i>p-1\), we define \(\updown{e}{p}_{2-k}=(0,\dots,0,1,0,\dots,0)^T\in \mathbb{R}^p\), the vector whose \(k\)-th entry is \(1\) and all other entries are \(0\) for \(k=1,\dots,p\). Subsequently, we define \(\updown{e}{p}_k(k\geq 2)\), via an iterative relation analogous to that of the \(p\)-order Fibonacci sequence. 
\[\updown{e}{p}_k=\sum_{j=1}^p\updown{e}{p}_{k-j},\quad k\geq2.\]
It is easy to see that the first element of the vector \(e_k^{(p)}\) corresponds to the \(k\)-th \(p\)-order Fibonacci number \(F_k^{(p)}\).
From Lemma \ref{lemma: l_min}, we know that for  \(i\geq p\) any \(k\geq 1\), 
\[l_{i+k}\geq \sum_{j=1}^pl_{i+k-j}.\]
Denote \(L_i=(l_i,\dots,l_{i-p+1})^T\in\mathbb{R}^p\). Then we know that
\[l_{i+k}\geq \updown{e}{p}_{k+1}\cdot L_i,\quad k\geq 2-p.\]
We denote by \(\updown{e}{p,-1}_k\in\mathbb{R}^{p-1}\) the vector obtained by removing first element from \(\updown{e}{p}_k\). For instance, \(\updown{e}{p,-1}_1=\mathbf{0}\). And \(L_i^{-1}=(l_{i-1},\dots,l_{i-p+1})^T\in\mathbb{R}^{p-1}\).
When \(k=n-i\), 
we get 
\[l_i\leq (l_n-\updown{e}{p,-1}_{n-i+1}\cdot L_i^{(-1)})/\updown{F}{p}_{n-i+1},\]
which implies 
\[l_i^{\text{max}}=(1-\updown{e}{p,-1}_{n-i+1}\cdot L_i^{(-1)})/\updown{F}{p}_{n-i+1}, \ i=p,p+1,\dots,n-1.\]

When \(1\le i\leq p-1\), correspondingly, we also define associated vectors \(\updown{e}{i}_{2-k}=(0,\dots,0,1,0,\dots,0)\in \mathbb{R}^{i}\), whose 
\(k\)-th entry is \(1\) and all other entries are \(0\) for \(k=1,\dots,i\). Also denote \(\updown{e}{i}_k=(1,0,\dots,0)^T\in \mathbb{R}^i,\)  for \( k=2,\dots ,p+1-i\), since \(l_{k+i-1}\) are order statistics. Similarly, we can compute \(\updown{e}{i}_{k}(k> p+1-i)\) using the iterative relation
 \[\updown{e}{i}_k=\sum_{j=1}^p\updown{e}{i}_{k-j},\quad k> p+1-i,\]
and we obtain
\[l_{i+k}\geq \updown{e}{i}_{k+1}\cdot L_i,\quad k\geq 1,\]
 where \(L_i=(l_i,\dots,l_1)\in \mathbb{R}^i\), Then 
 \[l^{\text{max}}_i=(1-\updown{e}{i,-1}_{n-k+1}\cdot L_i^{(-1)})/m_i.\]
 for \(i=1,\dots,p-1\), where \(L_i^{(-1)}=(l_{i-1},\dots,l_1)^T\in \mathbb{R}^{i-1}\), and $m_i$ are the positive integer constants in the expressions for $l_i^{\text{max}}$ given by Lemma \ref{lemma: l_max}. When \(i=1\), \(\updown{e}{i,-1}_{n-k+1}\) is empty, which is treated as zero in computations. 

\section{Appendix 2: A simple proof}

Let \(x_1, x_2, \dots, x_n\) be independent and identically distributed random variables with sample space \((0,1)\) that represent the length of $n$ sticks, and let their probability density function be denoted by \(p(x)\). Denote by \(l_1 \le l_2 \le \dots \le l_n\leq 1\) their order statistics. The condition that these sticks cannot form a \(p+1\)-gon is that, for any \(p+1\) sticks, the sum of the lengths of the \(p\) shortest sticks is not greater than the length of the longest stick. That is \((l_1,\dots,l_n)\) should belong to the set

\[S=\{(l_1,\dots,l_n):0\leq l_1\leq \dots\leq l_n\leq 1,l_{p+i+1}\geq \sum_{j=1}^pl_{i+j},i=1,\dots,n-p-1\}.\]
Therefore, the probability that no \(p+1\) sticks can be selected from them to form a \(p+1\)-gon is given by
\[PN_n^{p+1}=n!\int_S\prod_{i=1}^n p(l_i)dl_n\dots dl_1.\]
or 
\[PN_n^{p+1}=n!\int_{l_1^{min}}^1\int_{l_2^{min}}^1\dots \int_{l_n^{min} }^1\prod_{i=1}^n p(l_i)dl_n\dots dl_1.\]
where \(l_i^{min}=l_{i-1},i=1,2,\dots,p\) and \(l_{i}^{min}=\sum_{j=1}^pl_{i-j},i=p+1,\dots,n\). Here, we consider the case of a uniform distribution, i.e., the probability density function is \(p(x) = 1\) for \(0 < x < 1\). Under this condition, the desired integral is precisely the volume of the region \(S\).

We perform the change of variables \(y_i = l_i - l_{i}^{min}\geq0\) for \(i = 1, 2, \dots, n\). Take \(\vec{y}=(y_1,\dots,y_n)^T\in\mathbb{R}^n\) and \(\vec{l}=(l_1,\dots,l_n)^T\in\mathbb{R}^n\). Under this change of variables, we obtain the transformation matrix \(J\in \mathbb{R}^{n\times n}\)(i.e., the Jacobian matrix). It can be seen that the entries of the matrix \(J\) satisfy: \(J_{ij} = 1\) when \(i = j\); for \(2 \le i \le p\) and \(j = i-1\), \(J_{i,i-1} = -1\); for \(p+1 \le i \le n\) and \(i-p \le j \le i-1\), \(J_{ij} = -1\); and all other entries are \(0\). As all diagonal entries $J_{ii} = 1$ and all entries above the diagonal are zero, the determinant of $J$ is \(1\). This means that the volume of \(S^*=JS=\{J\vec{l}:\vec{l}\in S\}\) is equal to the volume of \(S\). To compute the volume of \(S^*\), the key is to determine the upper bounds of the variables \(y_i\). Using the relation \(\vec{l}= J^{-1} \vec {y}\) together with the condition \(l_n < 1\), these upper bounds can be obtained. We know
\(l_i=y_i+l_i^{\min}\). Therefore, \(l_1=y_1\), \(l_2=y_2+y_1\). Denote by \(J_i\) the \(i\)-th row vector of \(J^{-1}\), for \(i = 1, 2, \dots, n\). Thus \(l_i=J_i\cdot y\). It can be easily deduced by induction that for \(i \le p\), \(J_i\) is the row vector whose first \(i\) components are \(1\) and the remaining components are \(0\). For the remaining \(J_i\) with \(i > p\), they satisfy 
\[J_i=\sum_{j=1}^pJ_{i-j}+e_i.\]
Where \(e_i \in \mathbb{R}^n\) denote the unit vector whose \(i\)-th component is \(1\) and all other components are \(0\). For example, \(J_{p+1}=(p,p-1,p-2,\dots,2,1,1,0,\dots,0)\). 
Denote \(J_n=(m_1,\dots,m_n)\). Then \(l_n=J_n\cdot \vec{y}\leq 1\). Thus, we have obtained a complete characterization of \(S^*\), namely
\[S^*=\{y_1\geq0,\dots,y_n\geq0, \sum_{i=1}^nm_iy_i\leq1\}.\]
Take \(z_i=m_iy_i\), its Jacobian determinant is \(\left(\prod_{i=1}^n m_i\right)\), and the integration region is defined by \(z_i \ge 0\), \(\sum z_i \le 1\), whose volume is known to be \(\frac{1}{n!}\). Therefore, we obtain the value of the original integral as \(\frac{1}{n!\prod_{i=1}^n m_i}\). Consequently, we obtain
\[PN_n^{p+1}=\prod_{i=1}^n\frac{1}{m_i}.\]
It is easy to see that the \(m_i\) here are exactly the same as those in Lemma (\ref{lemma: l_max}), and their specific values are given in Lemma (\ref{lemma: mi_values}). 
To keep the proof in the appendix self‑contained and logically consistent, we present an alternative proof of Lemma\ref{lemma: mi_values} here. 
It is easy to see that when \( i \geq p-2 \), we have \( m_i = F^p_{n+1-i} \). 
The key point lies in the case \( i < p-2 \). Before proceeding, we first prove a lemma. We need to prove the following identity.

\begin{lemma}
  For any natural number \( t  \), we have
  \[2^{t+1}-\sum_{j=1}^{t}j2^{t-j}-t-1=1.\]
\end{lemma}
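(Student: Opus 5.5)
We prove the identity
\[2^{t+1}-\sum_{j=1}^{t}j2^{t-j}-t-1=1\]
by computing the weighted geometric sum in closed form. It is enough to show that
\[\sum_{j=1}^{t}j2^{t-j}=2^{t+1}-t-2,\]
since substituting this gives $2^{t+1}-(2^{t+1}-t-2)-t-1=1$.

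We argue by induction on $t$, writing $S_t=\sum_{j=1}^{t}j2^{t-j}$. For $t=0$ the sum is empty, and the claim reads $S_0=2-0-2=0$, which holds. For the inductive step, we pull one factor of $2$ out of each term of $S_{t+1}$:
\[S_{t+1}=\sum_{j=1}^{t+1}j2^{t+1-j}=2\sum_{j=1}^{t}j2^{t-j}+(t+1)=2S_t+t+1.\]
Using the inductive hypothesis $S_t=2^{t+1}-t-2$, this becomes
\[S_{t+1}=2^{t+2}-2t-4+t+1=2^{t+2}-(t+1)-2,\]
which is the claimed formula with $t$ replaced by $t+1$.

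As a cross-check, the identity can also be derived directly. Differentiating the geometric series gives $\sum_{j\ge1}jx^{j}$, and truncating it with $x=1/2$ and multiplying by $2^t$ yields
\[S_t=2^t\Bigl(2-\tfrac{t+2}{2^t}\Bigr)=2^{t+1}-t-2,\]
in agreement with the induction.

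None of these steps is a real obstacle; the only care needed is the index bookkeeping in the recurrence $S_{t+1}=2S_t+t+1$. The identity is what we will use in the proof of Lemma~\ref{lemma: mi_values}. For indices below $p$, the $p$-step Fibonacci numbers satisfy $F^p_{k}=2^{k-2}$ for $2\le k\le p$, as noted in equation~(\ref{eq: rec_rel}). Substituting this into $F^p_{n-i+1}-\sum_j jF^p_{n-i-j}$ should reduce the required row-sum comparison to the identity above.
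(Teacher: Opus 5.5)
Your proof is correct, but your main argument takes a different route from the paper's.

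\textbf{Your route.} You prove the closed form $S_t=\sum_{j=1}^{t}j2^{t-j}=2^{t+1}-t-2$ by induction, using the first-order recurrence $S_{t+1}=2S_t+t+1$. The empty-sum case $t=0$ serves as the base.

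\textbf{The paper's route.} It computes the sum directly. It writes $\sum_{j=1}^{t}jx^{j}=x\bigl(\frac{1-x^{t+1}}{1-x}\bigr)'$, expands the derivative as $\frac{x-x^{t+2}}{(1-x)^2}-\frac{(t+1)x^{t+1}}{1-x}$, and sets $x=\tfrac12$. The case $t=0$ is handled separately.

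\textbf{What each buys.} Your induction is more elementary: it needs no calculus, and $t=0$ is absorbed naturally as the base case. However, it presupposes that you already know the target formula. The paper's differentiation produces the formula without guessing, and it works for any ratio $x\neq 1$.

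Your ``cross-check'' is essentially the paper's method. You only assert the truncated value (full series $2$ minus tail $(t+2)/2^{t}$) rather than deriving it. Since the induction stands on its own, this is harmless.

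Your closing remark also matches how the paper uses the identity in Appendix~2. There $F^p_k=2^{k-2}$ (in fact valid for $2\le k\le p+1$) and $t=k-i-2$.
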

\begin{proof}
When \( t = 0 \), the summation is taken as \( 0 \), and a simple calculation shows that the equality holds; when \( t \geq 1 \), For the summation part, we have \(\sum_{j=1}^tj2^{t-j}=2^t{\sum_{j=1}^t}j2^{-j}\). On the other hand, we can get
\begin{equation*}
    \begin{aligned}
        \sum_{j=1}^tjx^{j}=x\big(\sum_{j=0}^tx^j\big)' =x\big(\frac{1-x^{t+1}}{1-x}\big)'
        =\frac{x-x^{t+2}}{(1-x)^2}-\frac{(t+1)x^{t+1}}{1-x}.
    \end{aligned}
\end{equation*}
Let \(x=\frac{1}{2}\), then 
\[\sum_{j=1}^tj2^{t-j}=2^t\big(\frac{2^{t+1}-1}{2^t}-(t+1)\frac{1}{2^t}\big)=2^{t+1}-1-(t+1).\]
Substituting, we obtain the desired identity.
\end{proof}

Fix \( i \). We know that \( m_i \) is given by the \( n \)-th term of a recurrence sequence \( \{T_j\}_{j\geq 1} \), with initial conditions \( T_1 = \dots = T_{i-1} = 0 \), \( T_i = \dots = T_p = 1 \). And \(T_k=\sum_{j=1}^pT_{k-j}\) for \(k> p\). Let 
\[ T'_k=F^p_{k-i+1}-\sum_{j=1}^{p-1-i}jF^p_{k-i-j}, \]
note that the right-hand side clearly satisfies the recurrence relation for \( F^p_k \). Since the sequence is uniquely determined by its initial values and recurrence, it suffices to verify that the sequence \(T_k'\) meets the initial conditions of \( T_k \). First, for \(1 \leq k < i\), all of the subscript indices on the $F_l^p$ numbers in the expression for $T_k'$ are \(\leq 0\), and the smallest subscript value in the summation is $k-p+1$, which occurs when \(j=p-1-i\). However, because we have \(k \geq 1\), this smallest subscript value is \(\geq 2-p\) and, therefore, all of the $F_l^p$ numbers and, hence, \(T'_k \) are equal to zero. For \(k=i\), \(T'_k=F^p_1=1\);  When \( p \geq k > i \), we have the following result
\begin{equation*}
    \begin{aligned}
        T'_k&=F^p_{k-i+1}-\sum_{j=1}^{k-1-i}jF^p_{k-j-i}+\sum_{j=k-i}^{p-1-i}jF^p_{k-j-i}\\
        &=2^{k-i-1}-\sum_{j=1}^{k-2-i}j2^{k-j-i-2}-(k-i-1),
    \end{aligned}
\end{equation*}
Set \( t = k - i - 2 \). Then by the lemma, we obtain \(T'_k=1\). Thus we have \(T_k=T'_k\) for all \(k\). Setting \(k=n\), we obtain
\[m_i=T_n=F^p_{n-i+1}-\sum_{j=1}^{p-1-i}jF^p_{n-i-j}.\]

We may also discuss the case of a uniform distribution supported on \((a, 1)\), where \(0 < a < 1\). It suffices to perform the translation \(y_1 = l_1 - a\), and the rest of the procedure remains exactly the same as before. In this case, the region whose volume we need becomes \[S^{*}(a)=\{y_1\geq0,\dots,y_n\geq0,m_1(y_1+a)+\sum_{i=2}^nm_iy_i\leq 1\}.\]
The inequality \(m_1(y_1+a)+\sum_{i=2}^nm_iy_i\leq 1\) tells us that \(a < 1/m_1\); otherwise, \(S^*(a)\) is the empty set, meaning the corresponding probability is zero. 
Let \(Z(a)=\{0\leq y_1\leq a,y_2\geq 0,\dots,y_n\geq 0, \sum_{i=1}^nm_iy_i\leq 1\}\), then \[|S^*(a)|=|S^*|-|Z(a)|,\]
Here, \(|\cdot|\) denotes the volume. Let \(Z' = \{y_2\geq 0,\dots,y_n\geq 0, \sum_{i=1}^nm_iy_i\leq 1-m_1y_1\}\); then the volume of \(Z'\) can be found to be \((1-m_1y_1)^{n-1}\big((n-1)!\prod_{i=2}^nm_i\big)^{-1}\). Thus
\[|Z|=\int_0^a\frac{(1-m_1y_1)^{n-1}}{(n-1)!\prod_{i=2}^nm_i}dy_1=\frac{1-(1-m_1a)^n}{n!\prod_{i=1}^nm_i}.\]
Subtracting this from the volume of \(S^*\), we obtain that the volume of \(S^*(a)\) is \((1-m_1a)^{n}\big(n!\prod_{i=1}^nm_i\big)^{-1}\). Together with the fact that \(\prod_{i=1}^n p(l_i)\) is identically equal to \(1/(1-a)^n\), we obtain that the final probability is 
\begin{equation*}
       \frac{(1-m_1a)^n}{(1-a)^n}\cdot\frac{1}{\prod_{i=1}^n m_i}, \text{ for }  a\leq 1/m_1. 
\end{equation*}


\medskip
\noindent MSC2020: 11B39, 33C05
\end{document}